\documentclass[preprint,1p]{elsarticle}
\usepackage{amssymb}
\usepackage{amsthm}
\usepackage{graphicx}
\usepackage{latexsym}                                                                      
\usepackage{amsmath,amsthm,amsfonts}                                                                                                                   
\usepackage{palatino}                                                                       
\usepackage[mathscr]{eucal}

\newtheorem{thm}{Theorem}[section]

 \newtheorem{lem}[thm]{Lemma}
\newtheorem{cor}[thm]{Corollary}
\theoremstyle{definition}

\theoremstyle{remark}
\newtheorem{remark}[thm]{Remark}
\numberwithin{equation}{section}
\newcommand{\R}{{\Bbb R}}

\newcommand{\C}{{\Bbb C}}


\begin{document}
\begin{frontmatter}

\title{On the uniqueness of semi-wavefronts for  non-local delayed reaction-diffusion equations }
\author{ Maitere Aguerrea}
\ead{maguerrea@ucm.cl}

\address{Facultad de Ciencias B\'asicas, Universidad Cat\'olica del Maule,\\ Casilla 617,
Talca, Chile.}

\begin{abstract}
\noindent We establish  the uniqueness of semi-wavefront solution for  a non-local delayed reaction-diffusion
equation.  This result is obtained by using a generalization  of the Diekman-Kaper theory for a nonlinear convolution equation. Several applications to the systems of non-local reaction-diffusion equations with distributed time  delay are also considered.

 \end{abstract}

\begin{keyword}
time-delayed reaction-diffusion equation; positive wavefront; non-local interaction; minimal wave; semi-wavefront; uniqueness.\\
\end{keyword}
\end{frontmatter}

\section{Introduction.}\label{into}
The main object of study in this work is  the non-local reaction-diffusion equation
\begin{equation}\label{i1}
u_t(t,x) =  u_{xx}(t,x)- f(u(t,x)) +  \int_0^\infty\int_{\R}K(s,w)g(u(t-s,x-w))dwds,\end{equation}
where  the time $t\geq 0$, $x\in \R$, the kernel $K$  satisfies $K\in L^1(\R_+\times\R)$, $K\geq 0$ and  $\int_0^\infty\int_{\R}K(s,w)dwds=1$. Here, $K$ can be asymmetric. We also assume the following conditions on the monostable nonlinearity $g$ and the function $f$:
 \begin{description}
 \item[\bf{ $H_1$:}] $g \in C(\R_+,\R_+)$ is  such that $g(0)=0$,  $g(s)>0$ for all $s>0$, and differentiable at 0 with $g'(0)>0$. 
\item[\bf{ $H_2$:}] $f \in C^1(\R_+,\R_+)$, $f(0)=0$, is strictly increasing with $f'(0)<g'(0)$.
\item[\bf{ $H_3$:}] 
$g,f \in C^{1,\alpha}$ in some neighborhood of $0$, with  $\alpha\in(0,1)$.
\end{description}
Equation (\ref{i1}), with appropriate  $f,g$ and $K$,  is often used to model  ecological and biological processes where the typical interpretation of $u(t,x)$  is the population density of mature species (see, e.g. \cite{Brit,fz,FWZ,gouk,gouS,gouss,lrw,MSo,OG,swz,tz}). In the particular case,  when $K(s,w)=\delta(s-h)K(w)$,  equation (\ref{i1}) reduces to the well studied model  
\begin{equation}\label{i1a}
u_t(t,x) =  u_{xx}(t,x)- f(u(t,x)) +  \int_{\R}K(w)g(u(t-h,x-w))dw.
\end{equation}
We are interesting  in the study of semi-wavefront solutions of equation  (\ref{i1}), i.e.  bounded positive continuous non-constant waves  $u(t,x) = \phi(x +ct)$, propagating with speed $c$,  and  satisfying the boundary condition  $\phi(-\infty) = 0$. An important special case of  semi-wavefront is a wavefront, i.e.  positive classical solution $u(t,x) = \phi(x +ct)$ satisfying $\phi(-\infty) = 0$ and $\phi(+\infty) = \kappa$. 

During the last decade, the existence and uniqueness of  traveling wave solutions for equation  (\ref{i1}) have been  investigated in several papers.  For instance, the existence problem has been approached by means of different methods and assuming different conditions on $f$, $K$ and $g$ (see, \cite{map,ft,fhw,gpt,LiWu,ma1,tat,wlr2,wang,xw,ycw}). Surprisingly,  the uniqueness question  appears to be considerably more difficult to answer
than the existence question.  In fact, only few theoretical studies have considered this important problem. Let us mention here \cite{agt,atv,Ai,fz,tz,ttt,wlr2,wl}, where the uniqueness of  semi-wavefronts for (\ref{i1}), was proved only in special  cases,  and  almost always  assuming condition
\begin{equation}\label{lip}
 |g(s_{1})-g(s_{2})|\leq L|s_{1}-s_{2}|, \,\,\, s_{1}, s_{2}\geq0,\,\, \text{for some}\,\,  L>0,
 \end{equation} 
 with $L=g'(0)$. It is worthwhile mentioning that the main idea of   the proofs of uniqueness in \cite{fz,tz,wl} is due to the seminal paper \cite{dk} by Diekmann and Kaper, where   it requires Lipschitz condition (\ref{lip}) with $L=g'(0)$. This condition is essential in constructions \cite{dk,fz,tz,wl} and can not be omitted or weakened within the framework of \cite{wl}. On the other hand,  works \cite{agt} and \cite{atv} showed that the  assumption  (\ref{lip}) with $L=g'(0)$ is not necessary to obtain the uniqueness  of fast wave solution of (\ref{i1a}) when $h>0$, both in the local and non-local cases. 

In any case, for the  non-local reaction-diffusion equation with distributed delay (\ref{i1}), the uniqueness problem of the semi-wavefront is still unsolved in the general case (e.g. $K$ asymmetric and $L\not=g'(0)$). In particular, the uniqueness problem of the minimal wave to (\ref{i1}), has not yet been solved (see \cite{fz}). The main objective of this work is to present a solution of this open problem. We also weaken  or  remove some restrictions on kernel and nonlinearities. 

In order to apply the techniques of \cite{agt}, we  must rewrite the equation (\ref{i1}) as the scalar integral equation
\begin{equation*}\label{dk1} 
 \phi(t) = \int_Xd\rho(\tau)\int_\R \mathcal N(s,\tau)g(\phi(t-s),\tau) ds, \quad t \in \R.
\end{equation*}
Here $(X,\rho)$ denotes a space with finite measure $\rho$, $\mathcal N(s,\tau) \geq 0$ is
integrable on $\R\times X$ with  $\int_{\R}\mathcal N(s,\tau)ds>0$,  $\tau \in X,$ while measurable $g: \R_+ \times X \to \R_+,$ $g(0,\tau) \equiv 0, $ is continuous 
in $\phi$ for every fixed $\tau \in X$.
We apply the theory developed in \cite{agt}  to prove the  uniqueness (up to translation) of  semi-wavefronts.
Since our main focus here is  the uniqueness of  semi-wavefronts, we assume  the existence of a semi-wavefront to (\ref{i1}).

Before presenting  our results, we have to introduce several definitions. Let  $c_*$ [respectively, $c_\star$] be the minimal value of  $c$ for which
\begin{equation*}\label{c1}
\chi_0(z,c):=z^2- cz-f'(0)+g'(0)\int_0^\infty \int_{\R}K(s,w)e^{-z\left(cs+w\right)}dwds,
\end{equation*}
\vspace{-0.2cm}
[respectively,
\vspace{-0.1cm}
\begin{equation*}\label{c2}
\chi_L(z,c):=z^2- cz-\inf_{s\geq 0}f'(s)+L\int_0^\infty \int_{\R}K(s,w)e^{-z\left(cs+w\right)}dwds,\, L\geq g'(0)] 
\end{equation*} 
 has  at least one positive root. We observe that  $c_\star\geq c_*$ and the function $\chi_0(z,c)$ is associated with the  linearization of (\ref{i1}) along the trivial equilibrium.  Some estimates for $c_*, c_\star$ can be found in   \cite{av,tt,WM}. For example,
 if $\int_0^\infty \int_{\R}K(s,w)w\,dwds\leq 0,$ then  
 \begin{align}\label{speed}c_*>\frac{g'(0)\left|\int_0^\infty \int_{\R}K(s,w)w\,dwds\right|}{1+g'(0)\int_0^\infty \int_{\R}K(s,w)s\,dwds}>0.
 \end{align}

 Let us present now the  main results of the paper, they  follow from  more general theorem  which will be proved in Section \ref{main}. 
  
\begin{thm} \label{2}Assume that {\bf{ $H_1$}}-{\bf{ $H_3$}} hold. Suppose further that for any $c\in \R$, there exists some $\gamma^\#=\gamma^\#(c)\in (0,+\infty]$ such that  $\chi_0(z,c)<\infty$ for each $z \in [0,\gamma^\#)$ and diverges, if $z>\gamma^\#$. If   $g$ satisfies the condition (\ref{lip}), then  equation (\ref{i1}) has at most one (modulo translation)  semi-wavefront   solution $u(x,t)=\phi(x+ct)$ for each $c> c_\star$, if $ \chi_L(\gamma^\#(c_\star)-,c_\star)=0$, and  for each   $c\geq c_\star$, if $ \chi_L(\gamma^\#(c_\star)-,c_\star)\not=0$. 
\end{thm}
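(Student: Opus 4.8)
The plan is to recast the traveling-wave problem for (\ref{i1}) as the scalar convolution equation displayed before the statement and then invoke the generalized Diekmann--Kaper uniqueness theory of \cite{agt}. Substituting $u(t,x)=\phi(\xi)$, $\xi=x+ct$, into (\ref{i1}) gives the integro-differential equation
\[
\phi''(\xi)-c\phi'(\xi)-f(\phi(\xi))+\int_0^\infty\!\!\int_\R K(s,w)\,g(\phi(\xi-cs-w))\,dw\,ds=0 .
\]
First I would fix a constant $\beta>\sup_{s\in[0,\sup\phi]}f'(s)$ and invert the operator $\partial_\xi^2-c\partial_\xi-\beta$ against its positive, exponentially decaying bilateral Green's function $G_\beta$ (the roots $\tfrac{c\pm\sqrt{c^2+4\beta}}{2}$ have opposite signs, so $G_\beta>0$ is integrable). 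This turns the equation into the fixed-point form
\[
\phi(\xi)=\int_\R G_\beta(\xi-\eta)\Big[\,(\beta\phi-f(\phi))(\eta)+\int_0^\infty\!\!\int_\R K(s,w)\,g(\phi(\eta-cs-w))\,dw\,ds\,\Big]\,d\eta ,
\]
where $\beta\phi-f(\phi)$ is nonnegative and nondecreasing by $H_2$. Taking $X$ to consist of a point (for the local term, with $\rho$ a unit mass, $\mathcal N=G_\beta$, and $g(\cdot,\tau)=\beta\phi-f(\phi)$) together with $\R_+\times\R$ (for the nonlocal term, with $d\rho=K(s,w)\,dw\,ds$, $\mathcal N(\cdot,(s,w))=G_\beta(\cdot-cs-w)$ after the shift $\eta\mapsto\eta-cs-w$, and $g(\cdot,\tau)=g$) reproduces exactly the abstract equation with $\mathcal N\ge0$ integrable and $g(0,\tau)\equiv0$.

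I would then verify that this reduction carries the characteristic functions of \cite{agt} onto $\chi_0$ and $\chi_L$. Linearizing the kernel at $0$ (replacing $\beta\phi-f(\phi)$ by $(\beta-f'(0))\phi$ and $g(\phi)$ by $g'(0)\phi$) and testing $\phi=e^{z\xi}$ with $\widehat{G_\beta}(z)=(\beta+cz-z^2)^{-1}$ collapses the fixed-point identity to $\chi_0(z,c)=0$, the constant $\beta$ cancelling; replacing instead $g(\phi)$ by its majorant $Ls$ and $f'(0)$ by $\inf_{s\ge0}f'(s)$ yields $\chi_L(z,c)$. The standing hypothesis that $\chi_0(\cdot,c)$ is finite on $[0,\gamma^\#)$ and diverges past $\gamma^\#$ fixes the exponential-tail admissibility and locates the rightmost relevant root, while estimate (\ref{speed}) keeps the minimal roots to the right of the origin.

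The central step is to determine the asymptotics of an arbitrary semi-wavefront as $\xi\to-\infty$. Applying an Ikehara-type Tauberian theorem to the bilateral Laplace transform of $\phi$ --- whose meromorphic continuation past the abscissa of convergence is furnished by the estimate $g(\phi)-g'(0)\phi=O(\phi^{1+\alpha})$ coming from $H_1$ and $H_3$ --- I would show that every semi-wavefront of speed $c$ behaves to leading order like $e^{\lambda(c)\xi}$ when the governing root $\lambda(c)$ of $\chi_0(\cdot,c)=0$ is simple and interior to $(0,\gamma^\#)$, and like $(-\xi)e^{\lambda\xi}$ in the degenerate double-root situation at $c=c_\star$. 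The dichotomy in the statement is precisely this alternative: when $\chi_L(\gamma^\#(c_\star)-,c_\star)=0$ the admissible rate sits on the boundary $\gamma^\#$, the transform acquires a singularity there that the Tauberian argument cannot resolve, and the asymptotics at the minimal speed remain undetermined, so uniqueness is asserted only for $c>c_\star$; when $\chi_L(\gamma^\#(c_\star)-,c_\star)\neq0$ the rate is interior, the clean leading asymptotic is available down to and including $c_\star$, and uniqueness extends to $c\ge c_\star$.

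Finally, given two semi-wavefronts $\phi_1,\phi_2$ of the same speed sharing this leading asymptotic, I would normalize their translates so that the asymptotics match and run the sliding comparison of \cite{agt}: positivity of $\mathcal N$, monotonicity of $\beta\phi-f(\phi)$, and the Lipschitz bound (\ref{lip}) force $\sup_\xi \phi_1(\xi)/\phi_2(\xi)=1$, hence $\phi_1\equiv\phi_2$ modulo translation. I expect the principal obstacle to be the critical regime $c=c_\star$: controlling the subleading correction in the asymptotic expansion so that the comparison still closes hinges on the sharp placement of the root of $\chi_L$ relative to $\gamma^\#$, which is exactly why the two cases must be separated.
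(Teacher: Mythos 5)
Your reduction to the abstract convolution equation and the identification of $\chi_0$, $\chi_L$ match the paper's setup (the paper packages the same decomposition on a two-point space $X=\{\tau_1,\tau_2\}$ with kernels $k_1*k_2$ and $k_1$; your parametrization by a point together with $\R_+\times\R$, $d\rho=K\,dw\,ds$, is an equivalent variant, and your extra requirement that $\beta\phi-f(\phi)$ be nondecreasing is unnecessary --- only the Lipschitz constant $\beta-\inf_{s\geq 0}f'(s)$ provided by Lemma \ref{beta} is used). The genuine problems lie in your closing step and in your reading of the dichotomy. The endgame you describe --- normalize the asymptotics and ``run the sliding comparison \ldots forcing $\sup_\xi \phi_1(\xi)/\phi_2(\xi)=1$'' --- is not the mechanism of \cite{agt} and would fail here: a sliding or ratio argument needs order preservation of the right-hand side, i.e.\ monotonicity of $g$, which is precisely \emph{not} assumed ($H_1$ allows non-monotone $g$, and the applications emphasize this generality). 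The Diekmann--Kaper theory of \cite{agt} instead estimates the difference $\phi_1-\phi_2$ in exponentially weighted norms, using only the Lipschitz bound and the sign condition $\chi_L(m,c)\leq 0$ at some interior point $m$; accordingly, the paper's proof (Theorem \ref{mth1}) consists of verifying the hypotheses {\bf (SB)}, {\bf (SB*)}, {\bf (EC$_{\gamma_\phi}$)}, {\bf (EC*)} via Lemma \ref{Hip} and $H_3$, and then citing Theorems \ref{agt1} and \ref{agt2}. You also miss a necessary case split: when $L=g'(0)$ and $f'(0)=\inf_{s\geq 0}f'(s)$, Theorem \ref{agt2} is inapplicable (it requires $\lambda(\tau)$ different from $g'(0,\tau)$), and the paper must switch to Theorem \ref{agt1}, after showing via Lemma \ref{zero} (using \cite[Lemma 6]{agt}) that $\gamma_\phi(c)$ coincides with the minimal zero $\lambda_1(c)$ and satisfies $\lambda_1(c)\leq m<\gamma_K(c)$ for every admissible $c\geq c'$.

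Second, you misdiagnose the dichotomy at $c_\star$. It does not stem from an unresolvable Tauberian singularity of the wave's transform at $\gamma^\#$: the wave asymptotics are governed by $\chi_0$, and the present paper never needs to establish the expansions $e^{\lambda\xi}$ or $(-\xi)e^{\lambda\xi}$ itself --- that machinery sits inside \cite{agt}. The condition $\chi_L(\gamma^\#(c_\star)-,c_\star)\neq 0$ enters through Lemma \ref{c00}(ii): it is exactly what guarantees that at the minimal speed the strictly convex function $\chi_L(\cdot,c_\star)$ attains its (double) zero at an \emph{interior} point of $(0,\gamma^\#(c_\star))$, so that some $m\in(0,\gamma_K(c_\star))$ with $\chi_L(m,c_\star)\leq 0$ exists and Theorem \ref{mth1} applies at $c=c_\star$ itself. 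When the boundary limit vanishes, $\chi_L(\cdot,c_\star)$ may remain positive on the whole open interval, no such $m$ exists, and uniqueness is obtained only for $c>c_\star$, where the strict decrease of $\chi_L(m,c)$ in $c$ produces the required $m$. So the dichotomy concerns the comparison function $\chi_L$, not $\chi_0$, and it is a statement about the location of a zero of $\chi_L$, not about the resolvability of the wave's asymptotics at $\gamma^\#$.
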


\begin{thm} \label{3}Assume all the conditions of Theorem \ref{2}.  Then for any  $c<c_*$,  the equation (\ref{i1}) has no semi-wavefront solution  $u(x,t)=\phi(x+ct)$ propagating  with speed  $c$. 
\end{thm}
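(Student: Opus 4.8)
\emph{Proof proposal.} The plan is to argue by contradiction, combining the strict positivity of the profile with a two-sided Laplace transform whose singularity is pinned down by the sign of $\chi_0(\cdot,c)$. Suppose that for some $c<c_*$ there exists a semi-wavefront $u(x,t)=\phi(x+ct)$, so that $\phi>0$ is bounded, $\phi(-\infty)=0$, and $\phi$ solves the scalar integral equation of Section~\ref{into} (equivalently the profile equation $\phi''-c\phi'-f(\phi)+\int_0^\infty\int_{\R}K(s,w)g(\phi(\cdot-cs-w))\,dw\,ds=0$). First I would record the basic sign information: by \textbf{$H_2$} we have $\chi_0(0,c)=g'(0)-f'(0)>0$, while the very definition of $c_*$ forces $\chi_0(\cdot,c)$ to have no positive zero when $c<c_*$. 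Since $z\mapsto\chi_0(z,c)$ is continuous on $[0,\gamma^\#(c))$, the intermediate value theorem then gives $\chi_0(z,c)>0$ for every $z\in[0,\gamma^\#(c))$.

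Next I would establish that $\phi$ decays exactly exponentially at $-\infty$, i.e. that the abscissa $\lambda^*:=\sup\{z>0:\int_{-\infty}^{0}\phi(\xi)e^{-z\xi}\,d\xi<\infty\}$ satisfies $0<\lambda^*<\infty$, and moreover $\lambda^*<\gamma^\#(c)$ (otherwise the convolution term below diverges). Finiteness of $\lambda^*$ comes from an exponential \emph{lower} bound: since $\mathcal N\geq0$ and $g(s,\tau)\geq(g'(0,\tau)-\epsilon)s$ for small $s$ by \textbf{$H_1$}, the integral equation yields, near $-\infty$, a convolution inequality $\phi\geq\theta\,\mathcal N_0*\phi$ with $\mathcal N_0\geq0$ of positive total mass; iterating this inequality shows $\phi$ cannot decay faster than a genuine exponential, so $\lambda^*<\infty$ and super-exponential decay is excluded. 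Strict positivity $\lambda^*>0$ comes from the matching exponential \emph{upper} bound, which follows from the renewal/convolution structure of the integral equation together with the integrability of $\mathcal N$; this is exactly the type of a priori estimate produced by the theory of \cite{agt}.

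The heart of the argument is then a Tauberian step. On the strip $0<\mathrm{Re}\,z<\lambda^*$ set $\Phi(z)=\int_{\R}\phi(\xi)e^{-z\xi}\,d\xi$, the tail at $+\infty$ being harmless since $\phi$ is bounded there. Transforming the profile equation and splitting $f(\phi)=f'(0)\phi+R_f$, $g(\phi)=g'(0)\phi+R_g$ with $R_f,R_g=O(\phi^{1+\alpha})$ near $0$ (here \textbf{$H_3$} is essential), I obtain an identity $\chi_0(z,c)\,\Phi(z)=N(z)$, where $N$ is the transform of the collected higher-order remainder. Because that remainder is $O(\phi^{1+\alpha})$ at $-\infty$, its transform $N$ is analytic on the strictly larger strip $0<\mathrm{Re}\,z<(1+\alpha)\lambda^*$, hence analytic at $z=\lambda^*$. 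On the other hand, since $\phi\geq0$, Landau's theorem for Laplace transforms of nonnegative functions guarantees that the real point $z=\lambda^*$ is a genuine singularity of $\Phi$. But Step~1 gives $\chi_0(\lambda^*,c)>0$, so $\Phi(z)=N(z)/\chi_0(z,c)$ is analytic in a neighbourhood of $\lambda^*$, contradicting the singularity of $\Phi$ there. This contradiction proves Theorem~\ref{3}.

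I expect the main obstacle to be the a priori exponential bounds of Step~2, and in particular the lower bound ruling out super-exponential decay: it requires exploiting positivity of both $\mathcal N$ and $g'(0)$ in the convolution inequality uniformly as $\xi\to-\infty$, which is precisely where the structural hypotheses on the kernel (nonnegativity, integrability, and $\int_\R\mathcal N(s,\tau)\,ds>0$) must be used with care. The Tauberian step itself is then robust: the crucial point is that positivity of the profile upgrades the abscissa of convergence into an actual singularity of $\Phi$, so that the \emph{strict} positivity $\chi_0(\lambda^*,c)>0$ available for $c<c_*$ can be converted into the desired contradiction.
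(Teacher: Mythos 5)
Your proposal is, in substance, the same Diekmann--Kaper mechanism on which the paper's proof rests, but you re-derive the key cited ingredient instead of quoting it. The paper proves Theorem \ref{3} in three lines: Lemma \ref{c00} furnishes $c_*$, and Lemma \ref{zero}(ii) states that the existence of a semi-wavefront with speed $c$ forces $\chi_0(\cdot,c)$ to have a zero in $(0,\gamma_\phi(c)]\subset(0,\gamma_K(c)]$, contradicting the minimality of $c_*$; Lemma \ref{zero}(ii) is in turn delegated to \cite[Theorem 2]{agt}. Your Step 1 (positivity of $\chi_0(\cdot,c)$ on $[0,\gamma^\#(c))$ for $c<c_*$, from $\chi_0(0,c)=g'(0)-f'(0)>0$ and minimality) is exactly how the paper uses $c_*$. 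Your Steps 2--3 are a reconstruction of the content of \cite[Theorems 1 and 2]{agt}: the exponential upper bound at $-\infty$ (hence $\lambda^*>0$) is the paper's Lemma \ref{ga}; the exclusion of super-exponential decay ($\lambda^*<\infty$) is immediate in the paper's framework since $\gamma_\phi(c)\le\gamma_K(c)\le\mu(c)<\infty$ by Lemma \ref{34}, though your iteration of the convolution inequality also works; and the Landau/Widder singularity argument combined with the factorization $\chi_0(z,c)\,\Phi(z)=N(z)$, with $N$ analytic on a wider strip thanks to {\bf $H_3$}, is precisely the continuation argument of \cite{dk,agt}. A presentational difference: you transform the profile equation (\ref{i2}) directly, while the paper first passes to the convolution form (\ref{dk1}) via the $\beta$-shift and the Green's kernel $k_1$ in order to invoke \cite{agt}; for a self-contained argument your direct route is legitimate.

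The one genuine gap is the strict inequality $\lambda^*<\gamma^\#(c)$, which you assert parenthetically (``otherwise the convolution term below diverges''). Convergence of the transformed equation on $(0,\lambda^*)$ only gives $\lambda^*\le\gamma^\#(c)$, and the hypotheses of Theorem \ref{2} allow the kernel transform $k(z)=\int_0^\infty\int_{\R}K(s,w)e^{-z(cs+w)}\,dw\,ds$ to remain finite up to, and even at, its abscissa $\gamma^\#(c)$ --- so nothing need ``diverge'' in the borderline case $\lambda^*=\gamma^\#(c)$. In that case your Step 3 collapses for a reason independent of the positivity of $\chi_0$: by the very Widder theorem you invoke for $\Phi$ (cited in the paper as \cite[Theorem 5b]{widder}), the abscissa $\gamma^\#(c)$ is a singular point of $k$ itself, hence of $\chi_0(\cdot,c)$ and of $N$, so $\Phi=N/\chi_0$ cannot be continued analytically through $\lambda^*$ even though $\chi_0(\lambda^*-,c)>0$. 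This endpoint case is exactly what the paper absorbs through the cited machinery: Lemma \ref{34} (via \cite[Lemma 1 and Corollary 1]{agt}) gives $\gamma_\phi(c)\le\gamma_K(c)$ with strictness when $\gamma^\#(c)=+\infty$ or when $(\mathcal L\mathcal N)(\gamma_K(c)-)=+\infty$, and the remaining endpoint configuration is treated inside the proof of \cite[Theorem 2]{agt}, whose conclusion is accordingly a zero in the half-open interval $(0,\gamma_\phi(c)]$ --- possibly at the endpoint --- rather than a clean continuation contradiction; the contradiction with the minimality of $c_*$ works all the same. (One can check that for $c<c_*$ even the limit $\chi_0(\gamma^\#(c)-,c)$ is strictly positive, using that $\chi_0(z,\cdot)$ is strictly decreasing in $c$; but that observation alone does not restore analyticity at $\lambda^*$, so the case $\lambda^*=\gamma^\#(c)$ genuinely requires the additional arguments of \cite{agt} and is not covered by your Step 3 as written.)
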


\begin{remark} We observe  that Theorem \ref{2} shows that  the special  Lipschitz condition $|g(s)-g(t)|\leq g'(0)|s-t|$ is not necessary to prove the uniqueness of fast semi-wavefronts solution. Moreover, our result also incorporates the critical case when $L=g'(0)$ and asymmetric kernels. Thus Theorem \ref{2} improves  the uniqueness results in \cite{fz,tz}, where  the uniqueness was established  for  $g$ satisfying  (\ref{lip})  with $L=g'(0)$  and  assuming either even or Gaussian kernel $K$, and without considering (with mentioned properties)   the uniqueness of the critical semi-wavefront.  
\end{remark}

\begin{remark} 
We also observe that the existence of $\gamma^\#$ in Theorem \ref{2}  is  a strong restriction.  In fact, we will show in the following section that the existence of semi-wavefront with speed $c$ implies that  $\chi_0(\gamma,c)<\infty$ for some $\gamma>0$. 
\end{remark}
The paper is organized as follows.  
Section 2 contains some preliminary results and transformations need  to apply  the method of \cite{agt} (for the convenience of the reader, we briefly  describe this method in Appendix). In  
Section 3, we analyze the characteristic equations  $\chi_0(z,c)=0$ and  $\chi_L(z,c)=0$. The estimation (\ref{speed}) for $c_\star$ is proved there. In Section 4, we prove our main results. Finally, in the last section, the uniqueness theorem is applied   to several population and epidemic models.

\section{Preliminaries.}
 
It is  clear that the profile $y=\phi$ of the  semi-wavefront  solution $u(t,x) = \phi(x +ct)$ to (\ref{i1}) must satisfy the equation
\begin{equation}\label{i2}
 y''(t) - cy'(t)-f(y(t))+ \int_0^\infty\int_{\R}K(s,w)g\left(y(t-cs-w)\right)dwds=0
\end{equation}
for all $t \in \R$. Note that this equation  can be written as 
\begin{equation}\label{ii3}
 y''(t) - cy'(t)-\beta y(t)+f_\beta(y(t))+  \int_0^\infty\int_{\R}K(s,w)g(y(t-cs-
w))dwds=0,
\end{equation}
where  $f_\beta(s)=\beta s- f(s)$ for some  $\beta>f'(0)$.  Hence, in order to establish the uniqueness of semi-wavefront solution to (\ref{i1}), we have to prove the uniqueness of positive bounded solution $\phi$  of equation (\ref{i2}), satisfying $\phi(-\infty)=0$.  

Being  $\phi$  a positive bounded solution to (\ref{i2}),  it should satisfy the  integral equation
\begin{align*} 
\phi(t)&=\frac{1}{\sigma(c)}\left(
\int_{-\infty}^te^{\nu(c)(t-s)}(\mathcal G\phi)(s)ds
+\int_t^{+\infty}e^{\mu(c)(t-s)}(\mathcal G\phi)(s)ds\right)\\\nonumber
&= \int_\R k_1(t-s)(\mathcal G\phi)(s)ds,\,\, t\in \R,
\end{align*}
where $$k_1(s)=(\sigma(c))^{-1}\left\{\begin{array}{cc}e^{\nu(c) s}, & s\geq 0 \\e^{\mu(c) s}, & s<0\end{array}\right.,$$ $\sigma(c)=\sqrt{c^2+4\beta}$,  $\nu(c)<0<\mu(c)$ are the roots of  $z^2-cz-\beta=0$
 and  the operator $\mathcal G$ is defined as $$(\mathcal G\phi)(t):= \int_0^\infty\int_{\R}K(s,w)g(\phi(t-cs-
w))dwds+f_\beta(\phi(t)).$$ Note that $(\mathcal G\phi)(t)$ can be rewritten as 
 \begin{align*} 
 (\mathcal G\phi)(t)&=\int_\R g(\phi(t-r))\int_0^\infty K(s,r-cs)ds dr+f_\beta(\phi(t))\\
&=\int_\R g(\phi(t-r))k_2(r) dr+f_\beta(\phi(t)),
\end{align*}
where,  by Fubini's Theorem, $$k_2(r)=\int_0^\infty K(s,r-cs)ds,$$
is well defined for all $r\in \R$.
  Consequently, $\phi$ also must satisfy the equation
\begin{align}\label{dk1} 
\nonumber \phi(t)&=(k_1*k_2)*g(\phi)(t)+ k_1* f_\beta(\phi)(t)\\
&=\int_Xd\rho(\tau)\int_\R \mathcal N(s,\tau)g(\phi(t-s),\tau) ds, \,t\in \R, \end{align}
where $X=\{\tau_1,\tau_2\}$, 
\begin{align*}\mathcal N(s,\tau)=\left\{\begin{array}{cc} ( k_1*k_2)(s),& \tau=\tau_1, \\   k_1(s),& \tau=\tau_2, \end{array}\right.\quad  g(s,\tau)=\left\{\begin{array}{cc}  g(s), & \tau=\tau_1, \\ f_\beta(s),& \tau=\tau_2,\end{array}\right. 
\end{align*}
and $*$ denotes convolution $(f*g)(t)=\int_\R f(t-s)g(s)ds$. 

Now we can invoke   the theory developed in \cite{agt} to prove the uniqueness of  positive bounded solution of (\ref{dk1}), vanishing at $-\infty$.  The following lemma shows that the existence of the semi-wavefront with speed $c$ assures that the functions $\chi_0(z,c)$ and $\chi_L(z,c)$ are well defined on $[0,\gamma]$ for some $\gamma>0$.

\begin{lem}\label{ga} Assume that { \bf{$H_1$}} and {\bf{$H_2$}} hold. If $\phi$ is a semi-wavefront  solution of (\ref{i2}) with speed $c$, then there exists $\gamma=\gamma(c)>0$ such that the integrals $$\int_{-\infty}^0\phi(s)e^{-s\gamma}ds \quad \text{ and }\quad  \int_0^\infty \int_{\R}K(s,w)e^{-\gamma(cs+w)}dwds$$ are convergent. 
\end{lem}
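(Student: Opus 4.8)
The plan is to work throughout with the scalar integral reformulation $\phi=k_1*\mathcal G\phi$ derived in the preliminaries, where $k_1\ge 0$ is the two-sided exponential Green's kernel with rates $\nu(c)<0<\mu(c)$, and $(\mathcal G\phi)(t)=(k_2*g(\phi))(t)+f_\beta(\phi)(t)\ge 0$. Fixing $M=\sup\phi<\infty$ and choosing $\beta$ so large that $f_\beta(s)=\beta s-f(s)\ge m_\beta s$ on $[0,M]$ for some $m_\beta>0$, I would first record two soft facts: (i) $\mathcal G\phi\ge 0$ and, by dominated convergence (dominating function $\|g\|_{\infty,[0,M]}\,k_2\in L^1$, together with $\phi(-\infty)=0$), $(\mathcal G\phi)(-\infty)=0$; (ii) since $\int_\R k_2=\int\int K=1$, Fubini's theorem identifies the second integral in the statement with $\int_\R k_2(r)e^{-\gamma r}\,dr$. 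Thus the task reduces to producing a single $\gamma>0$ for which both $\int_{-\infty}^0\phi(s)e^{-\gamma s}\,ds$ and $\int_\R k_2(r)e^{-\gamma r}\,dr$ are finite.

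The first genuine step is a clean exponential lower bound. Keeping only the local part $f_\beta(\phi)\ge 0$ of $\mathcal G\phi$ and using $k_1(t-s)=\sigma(c)^{-1}e^{\mu(c)(t-s)}$ for $s\ge t$,
\[
\phi(t)\ \ge\ \sigma(c)^{-1}e^{\mu(c)t}\int_{t_1-1}^{t_1}e^{-\mu(c)s}f_\beta(\phi(s))\,ds\ =:\ c_0\,e^{\mu(c)t},\qquad t\le t_1-1,
\]
with $c_0>0$ since $\phi>0$ on the compact $[t_1-1,t_1]$. Hence $\phi$ decays no faster than $e^{\mu(c)t}$ at $-\infty$. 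Combining this with $(\mathcal G\phi)(t_0)\ge(k_2*g(\phi))(t_0)$ and $g(\phi)\ge\tfrac{g'(0)}2\phi$ near $0$, positivity then gives, for every $t_0$,
\[
\int_{\{cs+w\ge t_0-t_2\}} K(s,w)\,e^{-\mu(c)(cs+w)}\,dw\,ds\ \le\ \frac{2}{g'(0)c_0}\,e^{-\mu(c)t_0}\,(\mathcal G\phi)(t_0).
\]
If the right-hand side stays bounded as $t_0\to-\infty$, then letting the region exhaust $\R_+\times\R$ and invoking monotone convergence yields the second integral with $\gamma=\mu(c)$.

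Everything therefore reduces to the matching exponential \emph{upper} bound $(\mathcal G\phi)(t_0)\le Ce^{\gamma t_0}$, equivalently $\phi(t)\le Ce^{\gamma t}$, which also delivers the first integral at once. This is the crux, and the place where the monostability gap $f'(0)<g'(0)$ of {\bf $H_2$} enters decisively. Heuristically, near $-\infty$ the profile obeys the linearisation $\phi''-c\phi'-f'(0)\phi+g'(0)(k_2*\phi)=0$; were $\phi$ to decay sub-exponentially, a Tauberian comparison would force $(k_2*\phi)/\phi\to\|k_2\|_1=1$, leaving the strictly positive residue $(g'(0)-f'(0))\phi>0$ uncancelled by the lower-order terms $\phi''-c\phi'$, which is impossible. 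Consequently $\phi$ must decay exponentially, and an exponential rate $\gamma$ is admissible only when $\int_\R k_2(r)e^{-\gamma r}\,dr<\infty$, i.e. exactly the second integral. To make this rigorous I would study the abscissa of convergence $\lambda_0:=\sup\{\gamma\ge 0:\int_{-\infty}^0\phi(s)e^{-\gamma s}\,ds<\infty\}\ge 0$ of the one-sided Laplace transform of $\phi$, and prove $\lambda_0>0$ by transforming the renewal equation $\phi=k_1*\mathcal G\phi$ and locating its first singularity, in the spirit of the Ikehara/Diekmann--Kaper analysis imported from \cite{agt}.

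The step I expect to be the main obstacle is precisely this last one: the exponential upper bound on $\phi$ and the finiteness of the kernel moment $\int_\R k_2\,e^{-\gamma r}\,dr$ are entangled, since the natural estimate of $(\mathcal G\phi)(t_0)$ by $Ce^{\gamma t_0}$ requires controlling $\int_{-\infty}^{t_0}k_2$, which is the kernel moment itself. Breaking this circularity is the real content of the lemma; it is driven by the sign condition $f'(0)<g'(0)$ together with the purely exponential form of $k_1$, after which the soft lower bound and the positivity estimate displayed above suffice to convert the exponential decay of $\phi$ into convergence of both integrals for a common $\gamma\in(0,\mu(c)]$.
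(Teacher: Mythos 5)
Your setup (the reduction to the convolution equation (\ref{dk1}), the Fubini identification of the second integral with $\int_\R k_2(r)e^{-\gamma r}dr$, and the lower bound $\phi(t)\geq c_0e^{\mu(c)t}$) is sound, but the proposal has a genuine gap exactly at what you yourself call the crux: the exponential upper bound on $\phi$, i.e.\ $\lambda_0>0$, is announced but never proved. The heuristic you offer does not work as stated: for a positive, bounded, sub-exponentially decaying $\phi$ there is no Tauberian principle forcing $(k_2*\phi)/\phi\to \|k_2\|_1$ pointwise (oscillatory decay profiles defeat such ratio comparisons without extra regularity or monotonicity), and the alternative plan of ``transforming the renewal equation and locating its first singularity'' is circular, since locating a singularity of $\mathcal L\phi$ presupposes a nonempty strip of convergence, which is precisely the claim $\lambda_0>0$. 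The mechanism that actually breaks the circularity you correctly identify is the Diekmann--Kaper sliding/iteration argument, which in this paper is packaged as \cite[Theorem 1, p.~77]{agt}: its hypothesis is the superlinearity-at-zero condition, which the paper verifies by computing, via monotone convergence, $\lim_{\delta\to 0+}\int_\R\int_X\mathcal N(s,\tau)p_\delta(\tau)\,d\rho(\tau)\,ds=1+(g'(0)-f'(0))/\beta>1$ --- this is exactly where the gap $f'(0)<g'(0)$ of {\bf $H_2$} enters --- and whose conclusion is verbatim the simultaneous convergence, for a common $\gamma>0$, of $\int_{-\infty}^0\phi(s)e^{-\gamma s}ds$ and of $\int_\R\int_X\mathcal N(s,\tau)p_\delta(\tau)\,d\rho(\tau)e^{-\gamma s}ds$; the kernel integral is then read off from the identity (\ref{23}). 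So the paper's proof is a short, complete verification-plus-citation, whereas your plan gestures at the same machinery without either invoking it precisely or reproving its content.

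There is also a secondary flaw in your conditional step for the second integral: your displayed inequality produces a uniform bound only if $(\mathcal G\phi)(t_0)=O(e^{\mu(c)t_0})$, i.e.\ decay at the \emph{maximal} rate $\mu(c)$. But the Diekmann--Kaper analysis yields decay only at some small rate $\gamma<\mu(c)$ (and indeed $\gamma_\phi(c)<\mu(c)$ necessarily when $\gamma^\#(c)=+\infty$, cf.\ Lemma \ref{34}); with $\phi=O(e^{\gamma t})$ your right-hand side grows like $e^{(\gamma-\mu(c))t_0}$ as $t_0\to-\infty$, and the monotone convergence step collapses. The estimate is repairable --- combining the lower bound $\phi\geq c_0e^{\mu(c)t}$ with a unit-interval summation gives the kernel transform at every rate $\gamma'<\gamma$, or one can quote \cite[Lemma 1, p.~80]{agt} as the paper does --- but as written the deduction of the second integral's convergence fails even granting the unproven exponential upper bound.
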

\begin{proof}
First, we define the  function $p_\delta(\tau):=\inf_{u\in(0,\delta)}\frac{g(u,\tau)}{u}$ for $\delta>0$, which is a measurable function on $X$. Since $\phi$ satisfies the equation (\ref{dk1}),
we can apply \cite[Theorem 1, p. 77]{agt} to prove that 
$\int_{-\infty}^0\phi(s)e^{-s \gamma}ds $ and  $\int_\R \int_X \mathcal N(s,\tau)p_\delta(\tau)d\rho(\tau)e^{-s\gamma}ds$ are convergent for an appropriate   $\gamma=\gamma(c)>0$. Indeed,  we first  observe that, by the monotone convergence theorem, 
\begin{align*}
&\lim_{\delta \to 0+} \int_\R \int_X \mathcal N(s,\tau)\inf_{u\in(0,\delta)}\frac{g(u,\tau)}{u}d\rho(\tau)ds=\int_\R \int_X \mathcal N(s,\tau)g'(0,\tau)d\rho(\tau)ds \\&=g'(0)\int_\R(k_1*k_2)(s)ds+f'_\beta(0)\int_\R k_1(s)ds
=1+\frac{g'(0)-f'(0)}{\beta}>1.
\end{align*}
Therefore,
\begin{align*}
\int_\R \int_X \mathcal N(s,\tau)p_\delta(\tau)d\rho(\tau)ds\in (1,\infty),
\end{align*}
for all $0<\delta<\delta'$, being $\delta'$ sufficiently small. In this way, since $g(u,\tau) \geq p_{\delta}(\tau)u, \,\,u\in (0,\delta)\subset(0,\delta')$, \cite[Theorem 1, p. 77]{agt} assures that there exists $\gamma=\gamma(c)>0$ such that $$\int_{-\infty}^0\phi(s)e^{-s\gamma}ds \quad \text{ and }\quad \int_\R \int_X \mathcal N(s,\tau)p_\delta(\tau)d\rho(\tau)e^{-s\gamma}ds$$ are convergent. Consequently, since
\begin{align*}
&\int_\R \int_X \mathcal N(s,\tau)p_\delta(\tau)d\rho(\tau)e^{-s\gamma}ds\geq\frac{1}{2}\int_\R \int_X \mathcal N(s,\tau)g'(0,\tau)d\rho(\tau)e^{-s\gamma}ds\geq 0,
\end{align*}
for all $\delta>0$ sufficiently small, we have
\begin{align}\label{23}
\int_\R \int_X \mathcal N(s,\tau)g'(0,\tau)d\rho(\tau)e^{-s\gamma}ds=\frac{g'(0)\int_0^\infty \int_{\R}K(s,w)e^{-\gamma(cs+w)}dwds+f'_\beta(0)}{\beta+c\gamma-\gamma^2}
\end{align}
is finite, and the proof follows.
\end{proof}
Next, let $(\mathcal{L} \mathcal N)(z)$ and $(\mathcal L \phi)(z)$ be the bilateral Laplace transforms   
$$(\mathcal{L} \mathcal N)(z)= \int_\R \int_X \mathcal N(s,\tau)g'(0,\tau)d\rho(\tau)e^{-sz}ds,$$
$$\displaystyle (\mathcal L \phi)(z)=\int_{\R}e^{-z s}\phi(s)ds,\,\,z\in\C.$$
From (\ref{23}) and Lemma \ref{ga} we conclude that $(\mathcal{L} \mathcal N)(z)$  is convergent,  if $0\leq\Re z\leq \gamma$, where $\gamma>0$ is given above. Then we can find  some  maximal number  $\gamma_K(c) \in (0,+\infty]$ such that  $(\mathcal{L} \mathcal N)(z)$ converges, if $\Re z \in [0,\gamma_K(c))$ and diverges,  if $\Re z>\gamma_K(c)$. Similarly, since $\phi$ is positive and bounded,  we  have that  $\mathcal L( \phi)(z)$  is convergent, if $\Re z\in(0,\gamma]$.  Thus we also can get   some maximal number $\gamma_\phi(c)\in (0,+\infty]$,  such that  $\mathcal L( \phi)(z)$ is   convergent, if $0 <\Re z < \gamma_\phi(c)$ and diverges, if $\Re z>\gamma_\phi(c)$. By \cite[Theorem 5b, p. 58)]{widder} $\gamma_K(c)$ and $\gamma_\phi(c)$ are singular point of 
$(\mathcal{L} \mathcal N)(z)$ and $(\mathcal L \phi)(z)$, respectively, if they are finite.

Now, let us analyze separately the integral
\begin{equation}\label{conv} 
\int_0^\infty \int_{\R}K(s,w)e^{-z(cs+w)}dwds.
\end{equation}
\begin{cor} \label{cor1} Assume that { \bf{$H_1$}} and {\bf{$H_2$}} hold. If further there exists  a semi-wavefront  solution of (\ref{i2}) with speed $c$, then  there exists  
an extended  real number $\gamma^\#(c)>0$  such that (\ref{conv}) converges when $z\in [0,\gamma^\#(c))$ and diverges, if $z>\gamma^\#(c)$. Moreover, the function $\gamma^\#(c)$ is increasing on  its domain of definition.
 \end{cor}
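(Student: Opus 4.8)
The plan is to study the convergence of the integral (\ref{conv}), viewed as a function of $z\geq 0$ alone, and to read off its interval structure directly from the sign of the exponent. For brevity write $I_c(z):=\int_0^\infty \int_{\R}K(s,w)e^{-z(cs+w)}dwds$. Since the integrand is nonnegative, Tonelli's theorem lets me pass freely between the double integral and the associated iterated integrals, so the convergence of $I_c(z)$ is unambiguous. Two facts are immediate: at $z=0$ the normalization of $K$ gives $I_c(0)=\int_0^\infty\int_\R K(s,w)\,dwds=1<\infty$, and by Lemma \ref{ga} there is some $\gamma=\gamma(c)>0$ with $I_c(\gamma)<\infty$. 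Hence the set $\{z\geq 0:\ I_c(z)<\infty\}$ contains $[0,\gamma]$, and I may define $\gamma^\#(c):=\sup\{z\geq 0:\ I_c(z)<\infty\}\in(0,+\infty]$, so that divergence for $z>\gamma^\#(c)$ holds by construction.

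The core step is to show that convergence in fact holds throughout $[0,\gamma^\#(c))$, i.e. that the convergence set is an interval anchored at $0$. I would split the domain of integration according to the sign of $cs+w$, writing $I_c(z)=I^+_c(z)+I^-_c(z)$, where $I^+_c$ integrates over $\{(s,w):\ s\geq0,\ cs+w\geq0\}$ and $I^-_c$ over $\{(s,w):\ s\geq0,\ cs+w<0\}$. On the first region $e^{-z(cs+w)}\leq 1$ for every $z\geq0$, so $I^+_c(z)\leq\int_0^\infty\int_\R K(s,w)\,dwds=1<\infty$ for all $z\geq0$ (and is nonincreasing in $z$). On the second region $cs+w<0$ makes $z\mapsto e^{-z(cs+w)}$ nondecreasing, whence $I^-_c(z)$ is a nondecreasing function of $z\geq0$. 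Therefore $I_c(z)<\infty$ precisely when $I^-_c(z)<\infty$, and the monotonicity of $I^-_c$ forces its set of finiteness to be an interval containing $0$; together with the definition of $\gamma^\#(c)$ as a supremum this yields convergence on all of $[0,\gamma^\#(c))$ and divergence for $z>\gamma^\#(c)$. Alternatively one may note that $z\mapsto \log I_c(z)$ is convex by Hölder's inequality, so its domain of finiteness is automatically an interval.

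For the monotonicity of $c\mapsto\gamma^\#(c)$, I would fix $z>0$ and compare two speeds $c_1<c_2$. Factoring $e^{-z(cs+w)}=e^{-zcs}e^{-zw}$ and using $s\geq0$, one has $e^{-zc_2 s}\leq e^{-zc_1 s}$, hence the pointwise bound $K(s,w)e^{-z(c_2 s+w)}\leq K(s,w)e^{-z(c_1 s+w)}$ and consequently $I_{c_2}(z)\leq I_{c_1}(z)$. Thus $I_{c_1}(z)<\infty$ implies $I_{c_2}(z)<\infty$, so $\{z\geq0:\ I_{c_2}(z)<\infty\}\supseteq\{z\geq0:\ I_{c_1}(z)<\infty\}$, and therefore $\gamma^\#(c_2)\geq\gamma^\#(c_1)$. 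This shows that $\gamma^\#$ is increasing on its domain.

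The only genuinely delicate point is pinning down the interval structure of the convergence set, i.e. ensuring it is exactly $[0,\gamma^\#(c))$ up to its endpoint rather than a possibly disconnected set. The decomposition by the sign of $cs+w$ resolves this cleanly, since it isolates the single ``increasing-in-$z$'' piece $I^-_c$ whose finiteness is monotone, while the complementary piece $I^+_c$ is harmless for every $z\geq0$. The endpoint $z=\gamma^\#(c)$ itself need not be decided, because the assertion concerns only $[0,\gamma^\#(c))$ and $(\gamma^\#(c),+\infty)$.
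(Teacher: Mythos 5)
Your proof is correct and follows essentially the same route as the paper: Lemma \ref{ga} supplies one positive point of convergence, $\gamma^\#(c)$ is defined as the supremum of the finiteness set, and the monotonicity of $\gamma^\#$ rests on the same pointwise bound $e^{-zc_2s}\leq e^{-zc_1s}$ for $s\geq 0$ (the paper casts this as a contradiction between $\gamma^\#(c_1)>\gamma^\#(c_2)$ and the maximality of $\gamma^\#(c_2)$, you as a direct inclusion of finiteness sets --- a cosmetic difference). Your decomposition by the sign of $cs+w$ is simply an explicit justification of the step the paper dismisses with ``it is easy to see,'' namely that convergence of (\ref{conv}) at some $\gamma>0$ implies convergence on all of $[0,\gamma]$.
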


\begin{proof}
Suppose that there exists  a semi-wavefront  solution of (\ref{i2}) with speed $c$. It is easy to see that the convergence of (\ref{conv}) for $\gamma>0$ implies its convergence for $z\in [0,\gamma]$. Then   from Lemma \ref{ga} it follows that there exists $\gamma^\#(c)>0$ an extended  number  such that (\ref{conv}) converges when $z\in [0,\gamma^\#(c))$ and diverges, if $z>\gamma^\#(c)$. We now prove  that  $\gamma^\#(c)$ is increasing on its domain of definition. On the contrary, suppose that $c_1<c_2$ and $\gamma^\#(c_1)>\gamma^\#(c_2)$. If  $\gamma$ is such that $\gamma^\#(c_2)<\gamma< \gamma^\#(c_1)$, then we can observe that
\begin{align*}
\int_0^\infty \int_{\R}K(s,w)e^{-\gamma(c_2s+w)}dwds\leq \int_0^\infty \int_{\R}K(s,w)e^{-\gamma(c_1s+w)}dwds<\infty,
\end{align*}
which  contradicts the maximality of $\gamma^\#(c_2)$. \end{proof}

 \begin{lem}\label{34} Suppose that { \bf{$H_1$}} and {\bf{$H_2$}} hold.  Let $\phi$ be  a semi-wavefront  solution of (\ref{i2}) with speed $c$.  Then, without the loss of generality, we have 
$$\gamma_K(c)=\left\{\begin{array}{cc}\gamma^\#(c),& \gamma^\#(c)<\infty \\\mu(c), & \gamma^\#(c)=\infty.\end{array}\right.$$
Moreover, $\gamma_K(c)$  is strictly increasing  on its domain of definition  and $\gamma_\phi(c)\leq \gamma_K(c)$. Finally,  if $\gamma^\#(c)=+\infty$, then $\gamma_\phi(c)< \gamma_K(c)$. \end{lem}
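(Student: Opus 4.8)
The plan is to reduce the whole statement to the single quotient representation of $(\mathcal{L}\mathcal N)(z)$ and then read off abscissae of convergence. First I would record that, on the nonnegative real axis, $\mathcal{L}(k_1)(z)=1/(\beta+cz-z^2)$ converges precisely for $z\in[0,\mu(c))$, its right-hand singularity being the root $\mu(c)$ of $z^2-cz-\beta=0$, while $\mathcal{L}(k_2)(z)$ is exactly the integral (\ref{conv}), whose right abscissa of convergence is $\gamma^\#(c)$ by Corollary \ref{cor1}. Since $\mathcal N\ge 0$ and $g'(0,\tau)\ge 0$, Tonelli's theorem gives $(\mathcal{L}\mathcal N)(z)=g'(0)\,\mathcal{L}(k_1)(z)\mathcal{L}(k_2)(z)+f'_\beta(0)\,\mathcal{L}(k_1)(z)$, and the bilateral integral converges iff each nonnegative factor does; hence the abscissa of convergence of $(\mathcal{L}\mathcal N)$ equals $\min\{\mu(c),\gamma^\#(c)\}$. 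The constant $\beta>f'(0)$ is free and $\mu(c)=(c+\sqrt{c^2+4\beta})/2\to\infty$ as $\beta\to\infty$, so for each fixed $c$ with $\gamma^\#(c)<\infty$ I may choose $\beta$ large enough that $\mu(c)>\gamma^\#(c)$; this is the meaning of ``without loss of generality'' and yields $\gamma_K(c)=\gamma^\#(c)$, while $\gamma^\#(c)=\infty$ forces $\gamma_K(c)=\mu(c)$. This establishes the displayed formula.

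For monotonicity I would combine the two ingredients: $\gamma^\#$ is increasing by Corollary \ref{cor1} and $\mu(\cdot)$ is strictly increasing, so $\gamma_K=\min\{\mu,\gamma^\#\}$ is increasing; strictness is immediate on the set where $\gamma^\#=\infty$, and where $\gamma^\#<\infty$ it must be upgraded from the weak monotonicity of $\gamma^\#$ supplied by Corollary \ref{cor1}. The inequality $\gamma_\phi(c)\le\gamma_K(c)$ I would obtain by a renewal-type comparison: using $\phi(-\infty)=0$, fix $T$ with $\phi(t)<\delta$ for $t<T$ and take $p_\delta(\tau)$ as in Lemma \ref{ga}; then for $t<T$ equation (\ref{dk1}) gives the lower bound $\phi(t)\ge\int_X d\rho(\tau)\int_{s>t-T}\mathcal N(s,\tau)p_\delta(\tau)\phi(t-s)\,ds$, since $s>t-T$ guarantees $\phi(t-s)<\delta$. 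Multiplying by $e^{-z_0 t}$, integrating over $t<T$ and applying Tonelli with the change of variables $u=t-s$ shows that for every real $z_0>\gamma_K$ the kernel factor diverges while the $\phi$-factor stays positive, whence $\mathcal L(\phi)(z_0)=\infty$; thus $\mathcal L(\phi)$ cannot converge past $\gamma_K$, i.e. $\gamma_\phi\le\gamma_K$.

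The decisive step is the final strict inequality when $\gamma^\#(c)=+\infty$, and this is where I expect the real work. Writing $g(u,\tau)=g'(0,\tau)u+R(u,\tau)$, equation (\ref{dk1}) splits into its linearization plus a remainder, and taking Laplace transforms while using $1-(\mathcal{L}\mathcal N)(z)=-\chi_0(z,c)/(\beta+cz-z^2)$ I would arrive at the clean identity $\chi_0(z,c)\,\mathcal L(\phi)(z)=-\bigl[\mathcal L(k_2)(z)\,\mathcal L(\tilde R_1)(z)+\mathcal L(\tilde R_2)(z)\bigr]$, where $\tilde R_1=g(\phi)-g'(0)\phi$ and $\tilde R_2=f_\beta(\phi)-f'_\beta(0)\phi$. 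A direct computation using $\mu^2-c\mu=\beta$ gives $\chi_0(\mu(c),c)=f'_\beta(0)+g'(0)\mathcal L(k_2)(\mu(c))>0$, so $\chi_0$ is analytic and nonzero near $z=\mu(c)$, and since $\gamma^\#=\infty$ the factor $\mathcal L(k_2)$ is finite there. The main obstacle is then to show the right-hand bracket extends analytically past $\mu(c)$: here I would invoke $H_3$, which forces $\tilde R_i=O(\phi^{1+\alpha})$ near $-\infty$, so that $\mathcal L(\tilde R_i)$ converges for $\Re z<(1+\alpha)\gamma_\phi$. If $\gamma_\phi$ were equal to $\mu(c)=\gamma_K(c)$, then $(1+\alpha)\gamma_\phi>\mu(c)$ and the quotient $\mathcal L(\phi)(z)=-\bigl[\mathcal L(k_2)(z)\mathcal L(\tilde R_1)(z)+\mathcal L(\tilde R_2)(z)\bigr]/\chi_0(z,c)$ would be analytic in a full neighborhood of $\mu(c)$, contradicting the theorem of Widder \cite{widder} that the finite abscissa $\gamma_\phi$ is a singular point of $\mathcal L(\phi)$. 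Hence $\gamma_\phi(c)<\mu(c)=\gamma_K(c)$. The two points I anticipate needing the most care are the rigorous passage from the abscissa $\gamma_\phi$ to a pointwise bound $\phi(t)\le Ce^{\lambda t}$ ($\lambda<\gamma_\phi$) that controls $\mathcal L(\phi^{1+\alpha})$, and the upgrade of the weak monotonicity of $\gamma^\#$ to the asserted strict monotonicity of $\gamma_K$.
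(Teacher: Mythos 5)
Your first half coincides with the paper's proof: formula (\ref{23}) exhibits $(\mathcal{L}\mathcal N)(z)$ as a quotient whose numerator is $g'(0)$ times (\ref{conv}) plus $f'_\beta(0)$ and whose denominator is $\beta+cz-z^2$, whence $\gamma_K(c)=\min\{\mu(c),\gamma^\#(c)\}$, and your ``without loss of generality'' via $\beta$ large is exactly the paper's device for making $\mu(c)>\gamma^\#(c)$ when $\gamma^\#(c)<\infty$. For $\gamma_\phi(c)\le\gamma_K(c)$ the paper simply cites \cite[Lemma 1, p.~80]{agt}; your renewal comparison is a reasonable blind reconstruction of that ingredient, though note that after the substitution $u=t-s$ the inner integral is $\int_{u<T-s}e^{-z_0u}\phi(u)\,du$, which depends on $s$ and tends to $0$ as $s\to+\infty$; the argument survives because for $z_0>0$ the portion of $(\mathcal L\mathcal N)(z_0)$ over $s\ge0$ always converges (since $k_1,\,k_1*k_2\in L^1$ and $e^{-z_0s}\le1$ there), so divergence for $z_0>\gamma_K$ must come from $s<0$, where your $\phi$-factor is bounded below by $\int_{u<T}e^{-z_0u}\phi(u)\,du>0$; one also needs $p_\delta(\tau)\ge\tfrac12 g'(0,\tau)$ for small $\delta$, as in Lemma \ref{ga}, to pass from the $p_\delta$-weight to $\gamma_K$.

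Two genuine gaps remain. First, you never prove strict monotonicity of $\gamma_K$ when $\gamma^\#(c)<\infty$: you explicitly defer the ``upgrade'' and supply no argument. Indeed strictness of $\gamma^\#$ itself can fail (for a product kernel $K(s,w)=\eta(s)K_0(w)$ with $\eta$ compactly supported, (\ref{conv}) converges independently of $c$, so $\gamma^\#$ is constant); the paper instead closes this case with the chain $\gamma_K(c_1)=\gamma^\#(c_1)\le\mu(c_1)<\mu(c_2)\le\gamma_K(c_2)$, borrowing all strictness from $\mu(\cdot)$ rather than from $\gamma^\#$. Second, and more seriously, your proof of the final claim $\gamma_\phi(c)<\gamma_K(c)$ invokes $H_3$: the bounds on your remainders $\tilde R_1=g(\phi)-g'(0)\phi$ and $\tilde R_2=f_\beta(\phi)-f'_\beta(0)\phi$ of order $O(\phi^{1+\alpha})$ are precisely the $C^{1,\alpha}$ hypothesis, whereas the lemma assumes only $H_1$--$H_2$; it also hinges on the unproven pointwise estimate $\phi(t)\le Ce^{\lambda t}$, $\lambda<\gamma_\phi$, needed to control $\mathcal L(\phi^{1+\alpha})$. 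The paper's argument is one line and needs neither: when $\gamma^\#(c)=\infty$ the numerator of (\ref{23}) is finite and bounded below by $f'_\beta(0)>0$ for all $z$, while the denominator $\beta+cz-z^2\to0^+$ as $z\to\mu(c)^-$, so $(\mathcal L\mathcal N)(z)\to+\infty$ at $\gamma_K(c)=\mu(c)$, and then \cite[Corollary 1, p.~80]{agt} (resting on the finiteness of $(\mathcal L\mathcal N)$ at $\gamma_\phi$) immediately gives $\gamma_\phi(c)<\gamma_K(c)$. Ironically, your computation $\chi_0(\mu(c),c)=f'_\beta(0)+g'(0)\,\mathcal L(k_2)(\mu(c))>0$ contains exactly the positivity that drives the paper's blow-up argument; the analytic-continuation/Widder machinery you assemble is the right tool elsewhere in this theory (it underlies \cite[Lemma 6]{agt}, used in Lemma \ref{zero}), but here it is both heavier than necessary and, under the lemma's stated hypotheses, not admissible.
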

\begin{proof}
 By  \cite[Lemma 1, p. 80]{agt}  we obtain   $(\mathcal{L} \mathcal N)(\gamma_\phi(c))$ is finite, and hence   $\gamma_\phi(c)\leq\gamma_K(c)$. 
Moreover,  from (\ref{23}) is clear that $\gamma_K(c)=\min\{\mu(c), \gamma^\#(c)\}$. If $\gamma^\#(c)<+\infty$, then we can choose a  sufficiently large $\beta$, such that $\mu(c)> \gamma^\#(c)$. Now, if we suppose that $c_1<c_2$, then 
$$\gamma_K(c_1)=\mu(c_1)<\mu(c_2)\leq \gamma_K(c_2),\,\, \text{if} \,\,\gamma^\#(c_1)=+\infty,$$
and 
$$\gamma_K(c_1)=\gamma^\#(c_1)\leq \mu(c_1)<\mu(c_2)\leq \gamma_K(c_2), \,\, \text{if}\,\, \gamma^\#(c_1)<\infty.$$
Which prove that the function $\gamma_K$ is strictly increasing  on its domain of definition.
On the other hand, if $\gamma^\#(c)=+\infty$, then $\gamma_K(c)=\mu(c)$ and 
\begin{align*}
\lim_{z \to \mu(c)-}\frac{g'(0)\int_0^\infty \int_{\R}K(s,w)e^{-z(cs+w)}dwds+f'_\beta(0)}{\beta+cz-z^2}=+\infty.
\end{align*}
 In consequence,   
\begin{align*}
\lim_{z \to \mu(c)-} \int_\R \int_X \mathcal N(s,\tau)g'(0,\tau))d\rho(\tau)e^{-s\gamma}ds=+\infty,\end{align*}
 by (\ref{23}), and finally the inequality 
 $\gamma_\phi(c) < \gamma_K(c)$  follows from \cite[Corollary 1, p. 80]{agt}. 

\end{proof}
Next,  we   establish some properties of $ \mathcal N(s,\tau)$, which will be necessary to apply   \cite{agt} (see conditions {\bf {(EC$_{\gamma_\phi}$)}}, {\bf {(SB)}},  {\bf {(SB*)}} and  {\bf{ (EC*)}}  in Appendix).

\begin{lem} \label{Hip} Assume that {\bf{ $H_1$}} and {\bf{ $H_2$}} hold. Let $\phi$ be  a semi-wavefront  solution of (\ref{i2}) with speed $c$. Then the following statements are valid:
\begin{enumerate}[(i)]
\item There is a measurable function  $C(\tau) >0$  such that 
\begin{equation}\label{ze}
\zeta(z) := \int_\R \int_X C(\tau)\mathcal N(s,\tau)e^{-sz}d\rho(\tau)ds <  +\infty, \,\,\, z \in (0, \gamma_K(c)). 
\end{equation}
\item For $z\in (0, \gamma_K(c))$  there exists a  measurable function 
$ d_{z} \in L^1(X)$ such that  
$$
0\leq \mathcal N(s,\tau) \leq  d_{z}(\tau)e^{z s}, \ s \in \R,\  \tau \in X. 
$$
\item For $z \in (0, \gamma_\phi(c))$  there exists a  measurable function 
$\tilde d \in L^1(X)$ such that  
\begin{equation*}
0\leq \mathcal N(s,\tau) \leq \tilde d(\tau)e^{zs}, \ s \in \R,\  \tau \in X. 
\end{equation*}

\end{enumerate}
\end{lem}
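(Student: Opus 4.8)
The plan is to verify the three assertions directly for the two-point space $X=\{\tau_1,\tau_2\}$, using only the structural facts already recorded. First note that both slices of $\mathcal N$ are nonnegative: $\mathcal N(\cdot,\tau_2)=k_1\geq0$ because $\sigma(c)>0$, and $\mathcal N(\cdot,\tau_1)=k_1*k_2\geq0$ because in addition $k_2(r)=\int_0^\infty K(s,r-cs)\,ds\geq0$ (as $K\geq0$); this already gives the left-hand inequalities $0\leq\mathcal N(s,\tau)$ in (ii) and (iii). For (i) I would take $C(\tau):=g'(0,\tau)$, which is strictly positive at each point, since $g'(0,\tau_1)=g'(0)>0$ by $H_1$ and $g'(0,\tau_2)=f'_\beta(0)=\beta-f'(0)>0$ by the choice $\beta>f'(0)$. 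With this $C$ the integral $\zeta(z)$ coincides with the transform $(\mathcal L\mathcal N)(z)$, whose definition carries precisely this weight $g'(0,\tau)$ and whose abscissa of convergence is $\gamma_K(c)$; hence $\zeta(z)<\infty$ for every $z\in(0,\gamma_K(c))$, which is (i).

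For (ii) the natural candidate is $d_z(\tau):=\sup_{s\in\R}\mathcal N(s,\tau)e^{-sz}$, because this choice automatically yields $\mathcal N(s,\tau)\leq d_z(\tau)e^{sz}$; the whole content is to check that $d_z(\tau_1)$ and $d_z(\tau_2)$ are finite. At $\tau_2$ this is immediate from the explicit form of $k_1$: one has $k_1(s)e^{-sz}=\sigma(c)^{-1}e^{(\nu(c)-z)s}$ for $s\geq0$ and $\sigma(c)^{-1}e^{(\mu(c)-z)s}$ for $s<0$, and both exponents have the correct sign since $\nu(c)<0<z$ and $z<\gamma_K(c)\leq\mu(c)$, the last inequality coming from (\ref{23}) and Lemma \ref{34}; therefore $d_z(\tau_2)=\sigma(c)^{-1}<\infty$. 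The step requiring care is $\tau_1$: setting $\hat k_i(s):=k_i(s)e^{-sz}$ one has $(k_1*k_2)(s)e^{-sz}=(\hat k_1*\hat k_2)(s)$, so a uniform-in-$s$ bound will follow from Young's inequality once $\hat k_1\in L^1(\R)\cap L^\infty(\R)$ and $\hat k_2\in L^1(\R)$ are established.

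These two integrability facts are exactly where the preliminary results enter. The weighted kernel $\hat k_1$ is bounded and integrable precisely because $\nu(c)<z<\mu(c)$, which forces both exponential tails to decay. For $\hat k_2$ I would use the substitution $w=r-cs$ to identify $\|\hat k_2\|_{L^1}=\int_\R k_2(r)e^{-zr}\,dr$ with the scalar integral (\ref{conv}); by Corollary \ref{cor1} the latter converges for $z<\gamma^\#(c)$, and $z<\gamma_K(c)\leq\gamma^\#(c)$ again by (\ref{23}) and Lemma \ref{34}. Young's inequality then gives $\sup_s(\hat k_1*\hat k_2)(s)\leq\|\hat k_1\|_{L^\infty}\|\hat k_2\|_{L^1}<\infty$, so $d_z(\tau_1)<\infty$. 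Since $X$ carries a finite measure $\rho$ and $d_z$ is finite at each of its two (automatically measurable) points, $d_z\in L^1(X)$, which closes (ii).

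Finally, (iii) follows from (ii) at no extra cost: Lemma \ref{34} gives $\gamma_\phi(c)\leq\gamma_K(c)$, so every $z\in(0,\gamma_\phi(c))$ lies in $(0,\gamma_K(c))$ and the bound of (ii) already applies with $\tilde d:=d_z$. I expect the only real obstacle to be the $\tau_1$ estimate, namely upgrading the mere convergence of the scalar integral (\ref{conv}) into a pointwise-in-$s$ control of the convolution $k_1*k_2$; the device of absorbing the exponential weight into each factor and invoking Young's inequality is what makes this routine.
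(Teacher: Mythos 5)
Your parts (i) and (ii) are correct and match the paper's own proof in substance. For (i) the paper takes $C(\tau)$ equal to arbitrary positive constants $C_1,C_2$ and computes $\zeta(z)=\frac{1}{\beta+cz-z^2}\left(C_1\int_0^\infty\int_\R K(r,w)e^{-z(w+cr)}\,dw\,dr+C_2\right)$, finite on $(0,\gamma_K(c))$ by Lemma \ref{34}; your choice $C(\tau)=g'(0,\tau)$ amounts to the same computation. For (ii) the paper bounds the explicit double integral representing $k_1*k_2$ via the pointwise comparisons $e^{\mu(c)x}\le e^{zx}$ for $x\le0$ and $e^{\nu(c)x}\le e^{zx}$ for $x\ge0$, arriving at exactly your $d_z(\tau_1)=\sigma(c)^{-1}\int_0^\infty\int_\R K(r,u)e^{-z(u+cr)}\,du\,dr$; your device of absorbing $e^{-sz}$ into each convolution factor and applying the $L^\infty$--$L^1$ estimate is a cosmetic repackaging of that same bound.

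The genuine gap is in (iii). You read the statement as ``for each $z\in(0,\gamma_\phi(c))$ there is some $\tilde d$'' and set $\tilde d:=d_z$; but on that reading (iii) is literally contained in (ii), since $\gamma_\phi(c)\le\gamma_K(c)$, and would be pointless to state separately. The notation --- $\tilde d$ with no subscript, against $d_z$ in (ii) --- and the paper's proof show the intended claim is that \emph{one} function $\tilde d\in L^1(X)$ serves all $z\in(0,\gamma_\phi(c))$ simultaneously; letting $z\uparrow\gamma_\phi(c)$ this yields $\mathcal N(s,\tau)\le\tilde d(\tau)e^{\gamma_\phi(c)s}$, the bound at the critical exponent needed to verify condition {\bf{(EC$_{\gamma_\phi}$)}} of \cite{agt}. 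Your family $d_z(\tau_1)$ is not obviously bounded as $z\uparrow\gamma_\phi(c)$ in the case $\gamma_\phi(c)=\gamma_K(c)=\gamma^\#(c)$ (possible when $\gamma^\#(c)<\infty$), so your deduction does not deliver this. The paper closes the point by splitting the kernel integral at $cr+u=0$: on $\{cr+u\ge0\}$ one uses $e^{-z(cr+u)}\le1$ together with $\int_0^\infty\int_\R K=1$, while on $\{cr+u<0\}$ one uses $e^{-z(cr+u)}\le e^{-\gamma_\phi(c)(cr+u)}$, and the resulting integral $\int_0^\infty\int_{-\infty}^{-cr}K(r,u)e^{-\gamma_\phi(c)(u+cr)}\,du\,dr$ is finite because $(\mathcal L\mathcal N)(\gamma_\phi(c))$ is finite (Lemma \ref{34}, via \cite[Lemma 1]{agt}). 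This produces $\tilde d_1=\sigma(c)^{-1}\bigl(1+\int_0^\infty\int_{-\infty}^{-cr}K(r,u)e^{-\gamma_\phi(c)(u+cr)}\,du\,dr\bigr)$, independent of $z$ --- the one non-routine ingredient of the lemma, and precisely the step your proposal omits.
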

\begin{proof}
Let $C(\tau)$ be given by $C(\tau)=C_1$, if $\tau=\tau_1$ and $C(\tau)=C_2$, if $\tau=\tau_2$, where the constant $C_1,C_2>0$. Then, 
\begin{align*}\label{ze}
\zeta(z) &=\int_\R k_1(s)e^{-sz}ds \left(C_1\int_0^\infty\int_\R K(r,w)e^{-z(w+cr)}dwdr+C_2 \right)\\
&=\frac{1}{\beta+cz-z^2}\left(C_1\int_0^\infty\int_\R K(r,w)e^{-z(w+cr)}dwdr+C_2 \right)
\end{align*}
is convergent for each  $z \in (0, \gamma_K(c))$, by Lemma (\ref{34}).
 We now  show  that there exist  functions $d_j$  on $(0, \gamma_K(c))$ and $\tilde d_j$  on $(0, \gamma_\phi(c))$ such that 
$$
0\leq \mathcal N(s,\tau_j)\leq d_{j}(z)e^{z s}, s \in \R,\, z \in (0, \gamma_K(c)),
$$
$$0\leq \mathcal N(s,\tau_j)\leq \tilde d_{j}(\tau)e^{z s}, s \in \R,\, z \in (0, \gamma_\phi(c)).$$
Indeed, if $j=1$, we have
\begin{align*}
\mathcal N(s,\tau_1) &=   \frac{1}{\sigma(c)}\left[\int_0^\infty\int_{s-cr}^{+\infty}e^{\mu(c) (s-cr-u)}K(r,u)dudr\right.\\
&
\quad+ \left. \int_0^\infty\int^{s-cr}_{-\infty}e^{\nu(c) (s-cr-u)}K(r,u)dudr \right ] \\
&\leq \frac{e^{zs}}{\sigma(c)}\int_0^\infty\int_\R K(r,u)e^{-z (u+cr)}dudr :=d_1(z) e^{zs}, \quad z \in (0, \gamma_K(c)).
\end{align*} 
 In the case $j=2$,   then  we have
\begin{align*}
\mathcal N(s,\tau_2) &=   k_1(s)\leq \frac{e^{zs}}{\sigma(c)}:=d_2 e^{zs},\quad z \in (0, \gamma_K(c)).
\end{align*} 
Finally, for $z \in (0, \gamma_\phi(c))$, we have
\begin{align*}
\mathcal N(s,\tau_1) &\leq  \frac{e^{zs}}{\sigma(c)}\left (\int_0^\infty\int_{-\infty}^{-cr} K(r,u)e^{-\gamma_\phi (u+cr)}dudr+1\right):=\tilde d_1e^{zs}<\infty,
\end{align*} 
by Lemma \ref{34}.
Therefore,
$$\tilde d(\tau)=\left\{\begin{array}{cc} \tilde d_1, & \tau=\tau_1, \\ d_2,& \tau=\tau_2,\end{array}\right., \,\,d_z(\tau)=\left\{\begin{array}{cc}  d_1(z), & \tau=\tau_1, \\ d_2,& \tau=\tau_2,\end{array}\right.$$
 and hence $d_z$ and $\tilde d$ are measurable functions on $(X,\mu)$.  
\end{proof}
 
\section{Characteristic functions.}\label{car}
To guarantee  the existence of $c_*$ and $c_\star$  defined in the Section \ref{into}, we  have  to analyze the real solutions of the equations $\chi_0(z,c)=0$ and $\chi_L(z,c)=0$. Thus it is convenient to consider a  more general equation:  
\begin{equation*}
\mathcal R(z,c):=z^2- cz-q+p\int_0^\infty \int_{\R}K(s,w)e^{-z(cs+w)}dwds=0,
\end{equation*}
where $p>q>0$.

 \begin{lem} \label{c00}
Suppose that given $c\in \R$, the function $\mathcal R(z,c)$ is defined for all $z$ from some maximal  interval $[0,\delta(c))$,   $\delta(c)\in (0,+\infty]$. Then   there exists $ c^\#\in \R$ such that 
\begin{enumerate}[(i)]
\item for any $c> c^\#$, the function $\mathcal R(z,c)$ has  at least  one positive zero  $z=\lambda_1(c) \in (0,\delta(c))$ and  can have  at most two positive zeros  on $(0,\delta(c))$. If the second zero exists, we denote it as $\lambda_2(c)>\lambda_1(c)$. Furthermore, each  $\lambda_j(c)<\mu_q(c)$, where $\mu_q(c)>0$ satisfies the  equation $z^2-cz-q=0$.
\item if $c=c^\#$  and $\lim_{z\uparrow \delta(c^\#)} \mathcal R(z,c^\#)\not=0$, then $\mathcal R(z,c^\#)$  has a unique zero  of  order two on $(0,\delta(c^\#))$, denoted by $z=\lambda_1(c^\#)$, and $\mathcal R(z,c^\#)>0$ for all $z\not=\lambda_1(c^\#)\in [0,\delta(c^\#))$. 
\end{enumerate}
\end{lem}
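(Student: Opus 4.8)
The plan is to exploit the convexity of $z\mapsto\mathcal R(z,c)$ together with a monotonicity in $c$. First I would record three structural facts. (a) For each fixed $c$, $\mathcal R(\cdot,c)$ is strictly convex on $[0,\delta(c))$: the map $z\mapsto e^{-z(cs+w)}$ is convex for every $(s,w)$, so $\int_0^\infty\int_\R K(s,w)e^{-z(cs+w)}dwds$ is convex, while $z^2-cz-q$ contributes $\partial_z^2=2$; hence $\partial_z^2\mathcal R\ge 2>0$. (b) $\mathcal R(0,c)=p-q>0$ (using $\int_0^\infty\int_\R K=1$), and for $z\ge\mu_q(c)$ one has $z^2-cz-q\ge0$ and $p\int_0^\infty\int_\R K e^{-z(cs+w)}dwds>0$, so $\mathcal R(z,c)>0$; thus every positive zero lies in $(0,\mu_q(c))$, and strict convexity forces at most two of them, which proves the ``at most two'' assertion and the bound $\lambda_j(c)<\mu_q(c)$ simultaneously. (c) For fixed $z>0$, $\mathcal R(z,\cdot)$ is strictly decreasing in $c$ — for $c_1<c_2$ and $s\ge0$ one has $e^{-zc_2s}\le e^{-zc_1s}$, so the integral term does not increase while $-cz$ strictly decreases — and, by the monotonicity argument already used in Corollary \ref{cor1}, $\delta(c)$ is non-decreasing.

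Next I would locate the threshold. Let $S=\{c:\mathcal R(\cdot,c)\text{ has a zero in }(0,\delta(c))\}$ and set $c^\#:=\inf S$. To see $S\ne\emptyset$, fix any $\bar c$ and $z_0\in(0,\delta(\bar c))$; for $c\ge\bar c$ one has $\int_0^\infty\int_\R K e^{-z_0(cs+w)}dwds\le\int_0^\infty\int_\R K e^{-z_0(\bar c s+w)}dwds<\infty$, so $\mathcal R(z_0,c)\le z_0^2-cz_0-q+p\int_0^\infty\int_\R K e^{-z_0(\bar cs+w)}dwds\to-\infty$, giving $c\in S$ for $c$ large. For the lower bound I would show that very negative $c$ admit no zero: writing $-cz=|c|z\ge0$, for $z\ge z_-(c):=\tfrac12(\sqrt{c^2+4q}-|c|)$ the term $z^2-cz-q\ge0$ already makes $\mathcal R>0$; on the range $z\le z_-(c)$, choosing a box $B=\{0\le s\le s_0,\ |w|\le w_0\}$ with $K(B)\ge1-\varepsilon$ where $p(1-\varepsilon)>q$ gives $\int_0^\infty\int_\R K e^{-z(cs+w)}dwds\ge(1-\varepsilon)e^{-z_-(c)w_0}$, whence $\mathcal R(z,c)\ge -q+p(1-\varepsilon)e^{-z_-(c)w_0}>0$ once $c$ is negative enough, since $z_-(c)\to0$. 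Thus $c^\#\in\R$. Moreover fact (c) makes $S$ upward closed: if $c_0\in S$ with zero $z^*>0$, then for $c>c_0$ one has $\mathcal R(z^*,c)<\mathcal R(z^*,c_0)=0$ while $\mathcal R(0,c)>0$, so the intermediate value theorem yields a zero in $(0,z^*)\subset(0,\delta(c))$. Hence every $c>c^\#$ lies in $S$ and has at least one positive zero, which, with (a)--(b), is exactly statement (i).

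Finally, for statement (ii) I would analyse $\mathcal R(\cdot,c^\#)$, which is strictly convex with $\mathcal R(0,c^\#)=p-q>0$ and boundary limit $\ell:=\lim_{z\uparrow\delta(c^\#)}\mathcal R(z,c^\#)\in(-\infty,+\infty]$ (the limit exists because $\partial_z^2\mathcal R\ge2$), assumed nonzero. I would first rule out $\ell<0$: such a profile has exactly one, necessarily simple, interior zero, which by the implicit function theorem persists for $c$ slightly below $c^\#$, contradicting $c^\#=\inf S$. Together with the hypothesis $\ell\ne0$ this leaves $\ell>0$, so $\mathcal R(\cdot,c^\#)$ is positive at both ends and attains a unique interior minimum at some $z^*$; near this non-degenerate minimum the value $M(c)=\min_z\mathcal R(z,c)$ depends continuously on $c$ (implicit function theorem applied to $\partial_z\mathcal R=0$). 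If $M(c^\#)<0$, the two zeros and the negativity would survive for $c<c^\#$, contradicting $c^\#=\inf S$; if $M(c^\#)>0$, then by continuity $M(c)>0$ for $c$ slightly above $c^\#$, contradicting that such $c$ lie in $S$. Hence $M(c^\#)=0$, i.e. $z^*$ is the unique double zero and $\mathcal R(z,c^\#)>0$ for every other $z\in[0,\delta(c^\#))$.

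I expect the genuine obstacle to be precisely this last step: controlling $\mathcal R(\cdot,c^\#)$ near the endpoint $\delta(c^\#)$ and ensuring that the coalescing branches $\lambda_1(c),\lambda_2(c)$ do not escape to the boundary as $c\downarrow c^\#$. This is exactly what the hypothesis $\ell\ne0$ is designed to prevent — it excludes the degenerate situation in which $\inf\mathcal R(\cdot,c^\#)=0$ is approached only as $z\uparrow\delta(c^\#)$, so that no honest interior double zero exists — and the difficulty is compounded by the fact that the domain endpoint $\delta(c)$ itself varies with $c$, so some care with the continuity of the minimum value (equivalently, with the limits of the two zero-branches) is required.
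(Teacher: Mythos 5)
Your part (i) is correct and follows essentially the paper's own route: strict convexity of $\mathcal R(\cdot,c)$ in $z$, the values $\mathcal R(0,c)=p-q>0$, the confinement of zeros below $\mu_q(c)$, strict monotonicity in $c$, and a threshold $c^\#$ defined as an infimum. In fact your finiteness argument for $c^\#$ from below --- splitting at $z_-(c)=\tfrac12(\sqrt{c^2+4q}-|c|)$ and using a box $B$ with $K(B)\geq 1-\varepsilon$, $p(1-\varepsilon)>q$ --- is more careful than the paper's remark that $\mathcal R(z,c)\to+\infty$ pointwise as $c\downarrow-\infty$, which by itself does not give positivity uniformly in $z$.

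Part (ii), however, contains a genuine gap, and it sits exactly where you predicted it would. You assert that when $\ell>0$ the function $\mathcal R(\cdot,c^\#)$ is ``positive at both ends and attains a unique interior minimum at some $z^*$''. That inference is false for convex functions: $\mathcal R(\cdot,c^\#)$ could be strictly decreasing on all of $[0,\delta(c^\#))$ with infimum $\ell>0$ approached only at the endpoint and never attained; then there is no critical point to feed into the implicit function theorem, and your $M(c^\#)>0$ branch breaks, because for $c>c^\#$ the minimization runs over the strictly larger interval $[0,\delta(c))$ and all zeros of $\mathcal R(\cdot,c)$ may lie in the new territory $[\delta(c^\#),\delta(c))$, escaping to the boundary as $c\downarrow c^\#$, so $M(c)$ need not be continuous at $c^\#$. (When an interior minimum does exist your argument is sound: by strict convexity the tracked critical point remains the global minimum even on the enlarged interval.) Excluding this escape scenario is the real content of (ii), and it is precisely where the hypothesis $\ell\neq 0$ must be used; the paper does it by following the minimal zeros along $c_j\downarrow c^\#$: $\lambda_1(c_j)$ increases, $\mathcal R(\lambda_1(c_k),c_j)<0$ for $k>j$ by monotonicity in $c$, so the limit $z'$ satisfies $\mathcal R(z',c^\#)\leq 0$ by monotone convergence, while $\mathcal R(\cdot,c^\#)\geq 0$, and $\lim_{z\uparrow\delta(c^\#)}\mathcal R(z,c^\#)\neq 0$ forces $z'<\delta(c^\#)$. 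Alternatively you could repair your branch with a chord estimate: if $\lambda_1(c_j)\to\delta(c^\#)$, convexity gives $\mathcal R(z,c_j)\leq (p-q)\bigl(1-z/\lambda_1(c_j)\bigr)$ on $[0,\lambda_1(c_j)]$, hence in the limit $\mathcal R(z,c^\#)\leq (p-q)\bigl(1-z/\delta(c^\#)\bigr)$ and so $\ell\leq 0$, contradicting $\ell>0$. A lesser, shared imprecision: your IFT step ruling out $\ell<0$, like the paper's one-line claim that $\mathcal R(z,c^\#)\geq 0$, quietly needs $\mathcal R(z,c)$ to be defined for $c$ slightly below $c^\#$ at interior $z$ (left continuity of $\delta$), which neither proof establishes; since the paper shares that handwave I would not count it against you, but the attained-minimum gap above is yours alone.
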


\begin{proof} 
First note that $\mathcal R(0,c)=p-q>0$ and $\lim _{c\downarrow-\infty} \mathcal R(z,c)=+\infty$ for $z\in (0,\delta(c))$. Since
 \begin{equation*}
 \frac{\partial^2 \mathcal R}{\partial z^2}(z,c)=2+p\int_0^\infty \int_{\R}K(s,w)e^{-z(cs+ w)}(cs+ w)^2dwds>0,\quad z\in [0,\delta(c)),
 \end{equation*}
 the function  $\mathcal R(z,c)$ is strictly convex with respect to $z$, and hence it has  at most two real zeros  for each $c$. Note that if $z=\lambda$ is a zero of $\mathcal R(z,c)$, then 
 $\lambda^2-c\lambda-q<0$, and hence $\lambda<\mu_q(c).$
 On the other hand, for $z\in (0,\delta(c))$ the function $\mathcal R(z,c)$ is  strictly decreasing  in $c$ and $\lim _{c\uparrow+\infty} \mathcal R(z,c)=-\infty$ pointwise,  by  Lebesgue's theorem of dominated convergence. Note here that  $\delta(c)$  is increasing in $c$ and 
    $$\int_0^\infty \int_{\R}K(s,w)e^{-z(cs+ w)}dwds\to 0\,\, \text{as}\,\, c\to+\infty, \,\, \text{for }\,\, z\in (0,\delta(0)).$$
Thus we can define $$c^\#=\inf\{c\in \R: \mathcal R(z,c)<0\,\, \text{for some}\,\,z\in (0,\delta(c)\},$$
and since $\mathcal R(z,c)$ is  strictly decreasing  in $c$, we have $\mathcal R(z,c^\#)\geq 0$ for all $z\in [0,\delta(c^\#))$. It is clear that if $c>c^\#$, then there exists some $z(c)>0$ such that $\mathcal R(z(c),c)<0$. Since $\mathcal R(0,c)>0$, we see that  $\mathcal R(z,c)$ has at least  one zero on $(0,\delta(c))$. By the above argument, $\mathcal R(z,c)$ can have   at most two positive  zeros, and hence we denote by $\lambda_1(c)$ to the minimal root of $\mathcal R(z,c)$ on $(0,\delta(c))$. 

On the other hand, if $\lim_{z\uparrow \delta(c^\#)} \mathcal R(z,c^\#)\not=0$ we assure that there exists a unique $z'\in (0,\delta(c^\#))$ such that 
$$\mathcal R(z',c^\#)=0, \frac{\partial\mathcal R}{\partial z}(z',c^\#)=0,  \,\, \text{and}\,\,\mathcal R(z,c^\#)>0\,\, \text{ for}\,\,z\not=z'\in [0,\delta(c^\#)).$$ Indeed, 
let $\{c_j\}$ be a decreasing sequence $c_j\downarrow c^\#$ such that  $\mathcal R(z_j,c_j)<0$ for some $z_j\in (0,\delta(c_j))$. Since $\mathcal R(0,c_j)>0$ for each $j$, there exists $\lambda(c_j)\in (0,\delta(c_j))$ such that $\mathcal R(\lambda(c_j),c_j)= 0$. We can assume that   $\lambda_1(c_j):=\lambda(c_j)$ is the minimal root of $\mathcal R(z,c_j)$ on $(0,\delta(c_j))$. Note that $\delta(c_j)\downarrow \delta(c^\#)$ and $\lambda_1(c_j)\in (0,\delta(c^\#))$ is strictly increasing when $c_j\downarrow c^\#$.  Thus, there exists some $z'\in (0,\delta(c^\#)]$ such that $\lambda_1(c_j)\uparrow z'$, $\mathcal R(z',c_j)<0$ and hence 
$$\mathcal R(z',c_j)<0\to \mathcal R(z',c^\#)\leq0\,\,\text{ as}\,\,   j\to\infty,$$ by Lebesgue's theorem of dominated convergence. We thus get that $\mathcal R(z',c^\#)=0$ and since $\lim_{z\uparrow \delta(c^\#)} \mathcal R(z,c^\#)\not=0$, we have  $z'\in (0,\delta(c^\#))$. Finally, $\frac{\partial\mathcal R}{\partial z}(z',c^\#)=0$ and $\mathcal R(z,c^\#)>0$ for all $z\not=z'$, because $\mathcal R(z,c^\#)$ is convex with respect to $z$.  We denote $\lambda_1(c^\#)=z'$,
and thus the proof is  complete.
 
 \end{proof}

Based on  \cite{agt} we introduce  the characteristic function $\chi$ associated with the variational equation along the trivial steady state of (\ref{dk1}), by 
$$
\chi(z):= 1-\int_\R \int_X \mathcal N(s,\tau)g'(0,\tau)d\rho(\tau)e^{-sz}ds. 
$$ 
We also will need  the following function 
$$
 \chi_L(z):= 1-\int_\R \int_X \mathcal N(s,\tau)\lambda(\tau)d\rho(\tau)e^{-sz}ds, 
$$ 
where 
\begin{align*}
\lambda(\tau)=\left\{\begin{array}{cc}  L, & \tau=\tau_1, \\ \beta-\inf_{s\geq 0}f'(s),& \tau=\tau_2,\end{array}\right.
\end{align*}
 is measurable function on $(X,\mu)$ with $L\geq g'(0)$.  
 
From now on, we  will say that real number $c$ is an admissible wave speed, if  there exists a semi-wavefront   solution of (\ref{i2}) propagating with velocity  $c$. Note that  $\chi(z)$ is well defined on $[0,\gamma_K(c))$ for each admissible $c$. In the following result  we establish  the relation between the zeros of the functions  $\chi_0(z,c)$, $\chi(z)$, $\chi_L(z,c)$ and  $\chi_L(z)$. Observe that 
\begin{align}\label{p}\nonumber
\chi(z)&=1-g'(0)\int_\R \mathcal N(s,\tau_1)e^{-zs}ds-(\beta-f'(0))\int_\R \mathcal N(s,\tau_2)e^{-zs}ds\\\nonumber&= 1-\frac{\beta-f'(0)}{\beta+cz-z^2}-\frac{g'(0)}{\beta+cz-z^2}\int_0^\infty\int_\R K(r,w)e^{-z(rc+w)}dwdr\\&
=-\frac{\chi_0(z,c)}{\beta+cz-z^2},
\end{align}
and so
\begin{align*}
\chi_L(z)= -\frac{\chi_L(z,c)}{\beta+cz-z^2}.
\end{align*}

 \begin{lem} \label{zero}
Assume that {\bf{$H_1$}} - {\bf{$H_3$}}  hold. Let $\phi$ be  a semi-wavefront  solution of (\ref{i2}) with speed $c'$. Then the following statements are true.
\begin{enumerate}[(i)]
\item The functions    $\chi_0(z,c')$  and $\chi_L(z,c')$ are well defined on $[0,\gamma_K(c'))$.
\item  The  equation $\chi_0(z,c')=0$ has at least one root $\lambda_1(c')\in (0,\gamma_\phi(c')]\subset (0,\gamma_K(c')]$. 
\item If  further we assume that  $g(s)\leq Ls$, $f_\beta(s)\leq (\beta-\inf_{s\geq 0}f'(s))s$, $s\geq 0$,   and  if  there exists   $m\in (0,\gamma_K(c'))$ such that $\chi_L(m,c')\leq 0$, then   $\lambda_1(c)=\gamma_\phi(c)\leq m<\gamma_K(c)$ for each admissible wave speed $c\geq c'$.
\end{enumerate}
  \end{lem}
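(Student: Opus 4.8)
The plan is to reduce all three claims to the real zeros of the characteristic functions via the identity (\ref{p}), namely $\chi(z)=-\chi_0(z,c')/(\beta+c'z-z^2)$ and its $\chi_L$-analogue. Claim (i) is then immediate: since $\phi$ is a semi-wavefront with speed $c'$, Corollary \ref{cor1} makes the integral (\ref{conv}) converge on $[0,\gamma^\#(c'))$, while Lemma \ref{34} gives $\gamma_K(c')=\min\{\mu(c'),\gamma^\#(c')\}\le\gamma^\#(c')$; as $\chi_0(\cdot,c')$ and $\chi_L(\cdot,c')$ are each a quadratic plus a positive multiple of (\ref{conv}), they are finite on $[0,\gamma_K(c'))$.

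For (ii), note that for $z\in(0,\mu(c'))$ the denominator $\beta+c'z-z^2$ is positive, so by (\ref{p}) a zero of $\chi$ there is exactly a zero of $\chi_0(\cdot,c')$. I would produce such a zero from the generalized Diekmann--Kaper theory of \cite{agt}, whose structural hypotheses were verified in Lemma \ref{Hip}. Splitting $g(\phi,\tau)=g'(0,\tau)\phi+[g(\phi,\tau)-g'(0,\tau)\phi]$ in (\ref{dk1}) and applying the bilateral Laplace transform gives $\chi(z)(\mathcal L\phi)(z)=(\mathcal L Q)(z)$ on $(0,\gamma_\phi(c'))$, with $Q$ the superlinear remainder. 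Condition $H_3$ yields $Q(t)=O(\phi(t)^{1+\alpha})$ as $t\to-\infty$, so $\mathcal L Q$ is analytic across $\gamma_\phi(c')$; since $\gamma_\phi(c')$ is a singular point of $\mathcal L\phi$ (Widder, as already invoked in the excerpt), the identity forces $\chi(\gamma_\phi(c'))=0$. Because $\gamma_\phi(c')\le\gamma_K(c')\le\mu(c')$ -- strictly below $\mu(c')$ after enlarging $\beta$ when $\gamma^\#(c')<\infty$, and strictly by Lemma \ref{34} when $\gamma^\#(c')=\infty$ -- one may pass through (\ref{p}) to get $\chi_0(\gamma_\phi(c'),c')=0$. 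Taking $\lambda_1(c')$ to be the minimal positive root of $\chi_0(\cdot,c')$ (it exists and is $\le\gamma_\phi(c')$ by Lemma \ref{c00}) proves (ii).

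For (iii), the extra hypotheses give $g(s,\tau)\le\lambda(\tau)s$ for all $s\ge0$, $\tau\in X$, so (\ref{dk1}) yields the linear inequality $\phi(t)\le(\mathcal N*(\lambda\phi))(t)$. Transforming (legitimate since $\phi>0$ and $\mathcal N\ge0$) and using the $\chi_L$-analogue of (\ref{p}), I obtain $\chi_L(z,c')\ge0$ for $z\in(0,\gamma_\phi(c'))$. As $\chi_L(\cdot,c')$ is strictly convex with $\chi_L(0,c')=L-\inf_{s\ge0}f'(s)>0$ (Lemma \ref{c00}), its first positive zero $\lambda_1^L(c')$ obeys $\gamma_\phi(c')\le\lambda_1^L(c')$, while $\chi_L(m,c')\le0$ forces $\lambda_1^L(c')\le m$; hence $\gamma_\phi(c')\le m<\gamma_K(c')$. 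Finally, $\chi_0(\cdot,c')\le\chi_L(\cdot,c')$ (from $L\ge g'(0)$, $f'(0)\ge\inf f'$ and positivity of (\ref{conv})) nests the negativity interval of $\chi_L$ inside that of $\chi_0$, so $\lambda_1(c')\le\lambda_1^L(c')\le\lambda_2^L(c')\le\lambda_2(c')$; combining this with $\gamma_\phi(c')\in\{\lambda_1(c'),\lambda_2(c')\}$ from (ii) and $\gamma_\phi(c')\le\lambda_1^L(c')$, a short case analysis (distinguishing $\chi_0\equiv\chi_L$ from $\chi_0<\chi_L$) excludes $\gamma_\phi(c')=\lambda_2(c')>\lambda_1(c')$, giving $\lambda_1(c')=\gamma_\phi(c')$.

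To reach every admissible $c\ge c'$, I would use that $\chi_L(z,\cdot)$ is strictly decreasing in $c$ for fixed $z>0$ (being an instance of $\mathcal R$ in Lemma \ref{c00}) and that $\gamma_K$ is increasing (Lemma \ref{34}): the same $m$ then satisfies $\chi_L(m,c)\le\chi_L(m,c')\le0$ with $m\in(0,\gamma_K(c))$, and repeating the argument above for a semi-wavefront of speed $c$ delivers $\lambda_1(c)=\gamma_\phi(c)\le m<\gamma_K(c)$. I expect the principal obstacle to be the analytic step in (ii) -- certifying that $\gamma_\phi(c')$ is a zero of $\chi$ in the borderline case where it coincides with $\gamma_K(c')$, since there the continuation of $\mathcal L Q$ past the singularity is delicate and must rest on \cite[Lemma 1 and Corollary 1, p. 80]{agt}; the convexity case-analysis excluding $\gamma_\phi(c')=\lambda_2(c')$ in (iii) is the secondary point needing care.
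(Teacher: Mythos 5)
Your proposal follows the paper's skeleton in part (i) (Lemma \ref{34} plus the identity (\ref{p})) and in the propagation step to admissible speeds $c\ge c'$ (monotonicity of $\chi_L(m,\cdot)$ in $c$, $\gamma_K$ increasing), but it diverges at the two places where the paper simply invokes results of \cite{agt}: for (ii) the paper cites \cite[Theorem 2, p.~81]{agt} to get a zero of $\chi$ in $(0,\gamma_\phi(c')]$ from $\chi(0)<0$ together with the structural hypotheses checked in Lemma \ref{Hip}, and for (iii) it cites \cite[Lemma 6, p.~88]{agt} to get $\gamma_\phi(c')=\lambda_1(c')$ from $g(s,\tau)\le\lambda(\tau)s$ and $\chi_L(m)\ge0$. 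You instead re-derive both. Your re-derivation of (ii) is essentially the proof of the cited theorem, and you correctly flag that it only works cleanly when $\gamma_\phi(c')<\gamma_K(c')$ (the right-hand side $\mathcal L Q$ continues past $\gamma_\phi$ only if $\mathcal L\mathcal N$ does); in the borderline case $\gamma_\phi=\gamma_K$ your sketch does not produce the endpoint zero, which is exactly why the statement has the closed interval $(0,\gamma_\phi(c')]$ and why the paper leans on the black box.

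The genuine gap is in your convexity case-analysis for (iii). Your exclusion of $\gamma_\phi(c')=\lambda_2(c')$ rests on the inequality $\gamma_\phi(c')\le\lambda_1^L(c')$, which you extract from $\chi_L(z,c')\ge0$ on $(0,\gamma_\phi(c'))$. That sign information only locates $\gamma_\phi$ relative to the \emph{open negativity set} of $\chi_L(\cdot,c')$. The hypothesis allows $\chi_L(m,c')=0$ with $m$ a double (tangency) zero; then $\chi_L(\cdot,c')\ge0$ on all of $[0,\gamma_K(c'))$, the negativity set is empty, and your argument gives no bound at all: strict convexity rules out $\chi_L$ vanishing on an interval, but it does not exclude the configuration $\lambda_1^L=\lambda_2^L=m<\lambda_2=\gamma_\phi$ (in the strict case $\chi_0<\chi_L$ this configuration is perfectly consistent with $\chi_0(m,c')<0$). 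Moreover, even in the non-tangent case your nesting chain $\lambda_1\le\lambda_1^L\le\lambda_2^L\le\lambda_2$ presupposes that $\lambda_2^L$ and $\lambda_2$ exist in $[0,\gamma_K(c'))$; both can fail when the characteristic functions stay negative up to $\gamma_K(c')$, in which case $\gamma_\phi$, a zero of $\chi_0(\cdot,c')$ in $(0,\gamma_K(c')]$, may sit at the endpoint and the case analysis must be redone. Closing these tangency/endpoint cases requires more than sign-and-convexity bookkeeping — one needs the transform identity $\chi(z)\mathcal L\phi(z)=\mathcal L Q(z)$ and the singularity structure of $\mathcal L\phi$ at $\gamma_\phi$, which is precisely the content of \cite[Lemma 6, p.~88]{agt} that the paper invokes. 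So your route is a legitimate, more self-contained variant in the generic situation, but as written it does not prove part (iii) in full generality; to repair it you should either import \cite[Lemma 6]{agt} as the paper does, or carry out its Laplace-transform argument in the tangency case rather than relying on convexity.
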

  \begin{proof}
  Since  $c'$ is an admissible wave speed, Lemma \ref{34} and  (\ref{p}) imply  that   $\chi(z)$ and $\chi_0(z,c')$ are well defined on $[0,\gamma_K(c'))$. Note that  $\chi_0(z,c')$ and $\chi_L(z,c')$ have the same interval of convergence. Hence $\chi_L(z,c')<\infty$, if  $z\in [0,\gamma_K(c'))$. Moreover,
$$\chi(0)=-\frac{\chi_0(0,c')}{\beta}=\frac{f'(0)-g'(0)}{\beta}<0.$$
  From \cite[Theorem 2, p. 81]{agt} we get that   $\chi(z)$   has a zero on $(0, \gamma_\phi(c')]\subset (0, \gamma_K(c')]$, and from   (\ref{p}), we see that $\chi_0(z,c')$  also has a zero  $z'\in (0, \gamma_\phi(c')]$. Note that, by  Lemma \ref{c00}, $\chi_0(z,c')$, and hence  $\chi(z)$ can have at most two positive zeros on $(0,\gamma_K(c'))$.

On the other hand, 
\begin{align}\label{L}
\chi_L(z)= -\frac{\chi_L(z,c')}{\beta+c'z-z^2}\leq-\frac{\chi_0(z,c')}{\beta+c'z-z^2}=\chi(z), \quad z\in [0,\gamma_K(c')).
\end{align}
From  (\ref{L}) and the condition $\chi_L(m,c')\leq 0$ with  $m\in (0,\gamma_K(c'))$, it follows that  $\chi_L(z)$ is well defined on $[0,\gamma_K(c'))$ and $\chi_L(m)\geq 0$. In addition, since  $g$ and $f$  satisfy 
\begin{align*}
g(s)\leq Ls,\,\,f_\beta(s)\leq (\beta-\inf_{s\geq 0}f'(s))s,\,s>0,
\end{align*}
 we obtain  $g(s,\tau)\leq \lambda(\tau)s, \,\, s>0$. Therefore  \cite[Lemma 6, p.88]{agt} implies that $\gamma_\phi(c')$  coincides with the minimal positive zero of $\chi(z)$, and hence $z'=\gamma_\phi(c')$. We denote $\lambda_1(c')=\gamma_\phi(c')$. In addition, since $\chi_0(m,c')\leq \chi_L(m,c')\leq 0$, we have  $$\lambda_1(c')=\gamma_\phi(c')\leq m<\gamma_K(c').$$ In this way, observe that  $\chi_L(m,c)$ is decreasing in $c$, and hence   $\chi_L(m,c)< \chi_L(m,c')\leq 0$ for  $c>c'$. Similarly to above,  \cite[Lemma 6]{agt} also allows to prove that    $\lambda_1(c)=\gamma_\phi(c)$ for each admissible wave speed $c>c'$. Finally, since  the functions $\lambda_1(c)$ is strictly  decreasing  and $\gamma_K(c)$ is strictly increasing  in $c$, we have that
 \begin{align*}
 \lambda_1(c)=\gamma_\phi(c)<\lambda_1(c')=\gamma_\phi(c')\leq m< \gamma_K(c')<\gamma_K(c),
 \end{align*} for each admissible wave speed  $c>c'$.  This completes the proof of lemma.

 \end{proof}
\begin{lem} Suppose that { \bf{$H_1$}} and {\bf{$H_2$}} hold.  If $c\in \R$ is an admissible wave speed, then
\begin{align}\label{ction2}
c>-\frac{g'(0)\int_0^\infty\int_\R K(s,w)w\,dwds}{1+g'(0)\int_0^\infty\int_\R K(s,w)s\,dwds}.
\end{align}
Hence, the estimation (\ref{speed}) is valid for each admissible wave speed, if
$$
\int_0^\infty\int_\R K(s,w)w\,dwds\leq 0.
$$

\end{lem}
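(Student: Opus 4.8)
The plan is to convert the admissibility of $c$ into the existence of a single positive root of the characteristic function $\chi_0(\cdot,c)$ and then to replace the exponential inside that root identity by its tangent line at the origin. Since $c$ is an admissible wave speed, there is a semi-wavefront of (\ref{i2}) with speed $c$, and Lemma~\ref{zero}(ii) provides a root $\lambda:=\lambda_1(c)\in(0,\gamma_K(c))$ of $\chi_0(\cdot,c)$; that is,
\[
\lambda^2-c\lambda-f'(0)+g'(0)\int_0^\infty\int_\R K(s,w)e^{-\lambda(cs+w)}\,dw\,ds=0 .
\]
Being a genuine root, $\lambda>0$ and all three terms are finite; the only further facts I would use are the normalization $\int_0^\infty\int_\R K(s,w)\,dw\,ds=1$ together with the sign conditions $g'(0)>0$ and $g'(0)>f'(0)$ furnished by $H_1$ and $H_2$.

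First I would invoke the elementary inequality $e^{-x}\ge 1-x$, valid for every real $x$, with $x=\lambda(cs+w)$. Multiplying by $K(s,w)\ge 0$ and integrating over $\R_+\times\R$ gives
\[
\int_0^\infty\int_\R K(s,w)e^{-\lambda(cs+w)}\,dw\,ds\ \ge\ 1-\lambda\,(c\,m_1+m_2),
\]
where $m_1:=\int_0^\infty\int_\R K(s,w)s\,dw\,ds\ge 0$ and $m_2:=\int_0^\infty\int_\R K(s,w)w\,dw\,ds$ are the first moments appearing in (\ref{speed}); note $m_1\ge 0$ because $K\ge 0$ and $s\ge 0$.

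Substituting this lower bound into the root identity and using $g'(0)>0$ yields
\[
0\ \ge\ \lambda^2-c\lambda-f'(0)+g'(0)\bigl[1-\lambda(c\,m_1+m_2)\bigr],
\]
which rearranges to $\lambda^2+\bigl(g'(0)-f'(0)\bigr)\le \lambda\bigl[c+g'(0)(c\,m_1+m_2)\bigr]$. By $H_2$ the left-hand side is strictly positive, so dividing by $\lambda>0$ gives $c+g'(0)(c\,m_1+m_2)>0$, i.e. $c\,(1+g'(0)m_1)+g'(0)m_2>0$; since $1+g'(0)m_1\ge 1>0$, one more division produces exactly (\ref{ction2}). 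For the closing assertion, if $m_2\le 0$ then $-g'(0)m_2=g'(0)|m_2|\ge 0$, so (\ref{ction2}) reads $c>g'(0)|m_2|/(1+g'(0)m_1)\ge 0$, which is precisely (\ref{speed}) with the admissible speed $c$ in the role of $c_*$, and in particular forces $c>0$.

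The single delicate point — and the main obstacle — is the legitimacy of integrating the linear bound term by term, which presupposes the first moments $m_1$ and $m_2$ to be finite; this finiteness is in any case implicit in (\ref{speed}), whose right-hand side would otherwise be meaningless. The part of $cs+w$ on which it is negative is automatically controlled, because $e^{-\lambda(cs+w)}\ge\lambda\,|cs+w|$ on $\{cs+w<0\}$ while the integral $\int_0^\infty\int_\R K(s,w)e^{-\lambda(cs+w)}\,dw\,ds$ is finite at the root $\lambda$, so only the finiteness of the positive moments need be assumed. Granting this, everything else is the elementary sign bookkeeping above, the essential mechanism being that a single positive zero of $\chi_0(\cdot,c)$ already encodes the lower bound on $c$ once the exponential is estimated from below by $1-\lambda(cs+w)$.
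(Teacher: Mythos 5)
Your proof is correct and follows essentially the same route as the paper: both take the positive root $\lambda_1(c)$ of $\chi_0(\cdot,c)$ supplied by Lemma~\ref{zero} and deduce that $c+g'(0)\int_0^\infty\int_\R K(s,w)(cs+w)\,dwds>0$, the paper phrasing this as $\frac{d}{dz}\chi_0(0,c)<0$ via convexity of $\chi_0$, while your pointwise bound $e^{-x}\ge 1-x$ substituted into the root identity is exactly the same tangent-line-at-zero estimate. Your explicit remark that the first moments must be finite for the statement to be meaningful, with the negative part of $cs+w$ controlled automatically by the convergence of $\chi_0$ at the root, is a small point of care that the paper's proof leaves implicit.
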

\begin{proof}   Let $c\in \R$ be an admissible wave speed. Then Lemma \ref{zero} implies that $\chi_0(z,c)$ is well defined on $[0,\gamma_K(c))$ and has at least one root $(0,\gamma_K(c)]$. Thus   $\frac{d}{dz}(\chi_0(0,c))<0$, and therefore 
$$c\Big(1+g'(0)\int_0^\infty\int_\R K(s,w)s\,dwds\Big)>-g'(0)\int_0^\infty\int_\R K(s,w)w\,dwds,$$
which  gives  (\ref{ction2}).   Thus  the proof complete.
\end{proof}

 \section {Non-existence and uniqueness of positive  semi-wavefront.}\label{main}
 
 In this section, we first  prove the non-existence result  given by Theorem \ref{3}.  Next, we   study the uniqueness of semi-wavefront developing a version which is  more complete than   Theorem \ref{2}  announced in the introduction. 
  \begin{proof}[Proof. Theorem \ref{3}]   
First, note that Lemma \ref{c00} guarantees the existence of $c_*$ as the minimal value of $c$  for which the equation $\chi_0(z,c)=0$ has at least one positive root. If we suppose that for $ c<c_*$ there exists  a semi-wavefront  solution of (\ref{i1}) with speed $c$,  then Lemma \ref{zero} implies that  $\chi_0(z,c)$ is well defined on $[0,\gamma_K(c))$ and   $\chi_0(z',c)=0$ for some $z'\in (0,\gamma_K(c)]$, which contradicts the minimality of $c_*$. 
\end{proof} 

\begin{lem}\label{beta} Suppose that condition  {\bf{ $H_2$}} holds. If $M$ is a positive constant,  there exists $\beta=\beta(M)>0$ sufficiently large such that $f_\beta(s)\geq 0$ for all $s\geq 0$ and
\vspace{-0,1cm}
\begin{align*}
|f_\beta(s_1)-f_\beta(s_2)|\leq \Big(\beta-\inf_{s\geq 0}f'(s)\Big)|s_1-s_2|,\,\,s_1,s_2\in[0,M].
\end{align*}
\end{lem}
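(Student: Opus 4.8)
The plan is to reduce both assertions of the lemma to a single sign property of the derivative $f_\beta'(s)=\beta-f'(s)$ and then to read off each conclusion from the fundamental theorem of calculus. Since $f\in C^1(\R_+,\R_+)$ by $H_2$, the derivative $f'$ is continuous and therefore attains a finite maximum $L_M:=\max_{s\in[0,M]}f'(s)$ on the compact interval $[0,M]$. I would fix $\beta=\beta(M)$ to be any number with $\beta\ge L_M$ (taking it strictly larger than $f'(0)$ if needed, so as to respect the standing constraint $\beta>f'(0)$); this is what \emph{sufficiently large} should mean. The only structural fact used repeatedly is that $f'\ge 0$ on $\R_+$ (because $f$ is increasing), so in particular $\inf_{s\ge0}f'(s)$ is a finite nonnegative number and the constant $\beta-\inf_{s\ge0}f'(s)$ on the right-hand side is well defined and strictly positive.

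For the Lipschitz-type inequality, I would fix $s_1,s_2\in[0,M]$, assume without loss of generality $s_1\ge s_2$, and write
\[
f_\beta(s_1)-f_\beta(s_2)=\int_{s_2}^{s_1}\bigl(\beta-f'(s)\bigr)\,ds .
\]
Two elementary bounds on the integrand then finish the estimate. First, $\beta-f'(s)\ge\beta-L_M\ge0$ for $s\in[0,M]$, so the integral is nonnegative and the absolute value may be dropped. Second, $f'(s)\ge\inf_{u\ge0}f'(u)$ for every $s$ by definition of the infimum, hence $\beta-f'(s)\le\beta-\inf_{u\ge0}f'(u)$. Integrating this upper bound over $[s_2,s_1]$ gives exactly $|f_\beta(s_1)-f_\beta(s_2)|\le\bigl(\beta-\inf_{s\ge0}f'(s)\bigr)|s_1-s_2|$. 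The same monotonicity yields the nonnegativity on $[0,M]$: with $\beta\ge L_M$ the map $f_\beta$ is nondecreasing on $[0,M]$, and since $f_\beta(0)=\beta\cdot 0-f(0)=0$ by $H_2$, we conclude $f_\beta(s)\ge0$ there.

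The delicate point, and the one I expect to be the real obstacle, is upgrading the nonnegativity from $[0,M]$ to all of $s\ge0$. Monotonicity of $f_\beta$ on the whole half-line requires $\beta\ge f'(s)$ for every $s\ge0$, which is achievable with a finite $\beta$ only when $f'$ is bounded (equivalently, when $f$ grows at most linearly, $\sup_{s>0}f(s)/s<\infty$); for strictly superlinear $f$ no fixed $\beta$ can keep $\beta s-f(s)\ge0$ as $s\to\infty$. I would therefore take $\beta\ge\max\{L_M,\ \sup_{s>0}f(s)/s\}$, under which $f_\beta$ is nondecreasing on all of $\R_+$ and hence nonnegative since $f_\beta(0)=0$; this recovers the statement for the death-rate nonlinearities of the final section, where $f'$ is bounded. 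It is worth emphasizing that only the values of $f_\beta$ on the range of the bounded profile $\phi$, which is contained in some $[0,M]$, enter the convolution equation, so the conclusions on $[0,M]$ are what the subsequent arguments actually use; the genuinely global clause is the only place where a mild growth restriction on $f$ is implicitly required.
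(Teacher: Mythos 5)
Your proof is correct on $[0,M]$, and it takes a genuinely different (and slightly cleaner) route than the paper, at the price of a larger $\beta$. The paper does \emph{not} require $\beta\geq\max_{s\in[0,M]}f'(s)$: it only asks that $f_\beta\geq 0$ on $[0,M]$ and that $\max_{s\in[0,M]}f'(s)\leq 2\beta-\inf_{s\geq 0}f'(s)$. Under this weaker threshold $f_\beta$ need not be monotone on $[0,M]$, so the paper applies the Mean Value Theorem to the difference quotient and derives a \emph{two-sided} estimate,
\begin{align*}
-\Big(\beta-\inf_{s\geq 0}f'(s)\Big)\leq \frac{f_\beta(s_2)-f_\beta(s_1)}{s_2-s_1}\leq \beta-\inf_{s\geq 0}f'(s),
\end{align*}
where the lower bound comes exactly from the inequality $\max_{[0,M]}f'\leq 2\beta-\inf_{s\geq 0}f'$. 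You instead choose $\beta\geq\max_{s\in[0,M]}f'(s)$, which makes $f_\beta$ nondecreasing on $[0,M]$; then the fundamental-theorem representation needs only the one-sided upper bound $\beta-f'(s)\leq\beta-\inf_{u\geq 0}f'(u)$, and the nonnegativity $f_\beta\geq 0$ on $[0,M]$ drops out for free from $f_\beta(0)=0$ rather than being imposed as a separate requirement (indeed $\beta\geq\max_{[0,M]}f'$ already forces $\beta\geq\sup_{s\in(0,M]}f(s)/s$ by the Mean Value Theorem). Nothing is lost downstream by taking $\beta$ larger: the Lipschitz constant has the same form $\beta-\inf_{s\geq 0}f'(s)$ in both arguments, and the paper itself enlarges $\beta$ freely elsewhere (see the proof of Lemma \ref{34}). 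In short, the paper buys the smaller admissible $\beta$ with a two-sided MVT estimate; you buy monotonicity and a one-sided estimate with a bigger $\beta$.

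Your ``delicate point'' is a genuine and worthwhile catch rather than an obstacle of your own making: the clause ``$f_\beta(s)\geq 0$ for all $s\geq 0$'' in the statement is \emph{not} established by the paper's proof, which only secures nonnegativity on $[0,M]$, and for superlinear $f$ satisfying {\bf $H_2$} (e.g.\ $f(s)=s^2$) no finite $\beta$ can make $\beta s-f(s)\geq 0$ on all of $\R_+$. Your repair ($\beta\geq\sup_{s>0}f(s)/s$, hence a linear-growth restriction on $f$) is the only way to rescue the global clause literally; but, as you correctly observe and as Remark \ref{not} confirms, the uniqueness argument only ever evaluates $f_\beta$ on the range of the bounded profiles, i.e.\ on some $[0,M]$, so the $[0,M]$-version you and the paper both prove is all that is actually used.
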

\begin{proof} Let $M$ be any positive number. Since $f$ is  continuously differentiable on $[0,M]$ and $f(0)=0$,  we can choose $\beta>\inf_{s\geq 0}f'(s)$ sufficiently large   such that $f_\beta(s)=\beta s- f(s)\geq 0$ for all $s\in [0,M]$ and 
\vspace{-0,2cm}
$$\max _{s\in[0,M]}f'(s)\leq 2\beta-\inf_{s\geq 0}f'(s).$$
By the Mean Value Theorem, it follows that   $f(s_2)-f(s_1)=f'(s_0)(s_2-s_1)$ for some $s_0\in [s_1,s_2]\subset [0,M]$. Thus we get
\vspace{-0,1cm}
\begin{align}\label{ff1}
\frac{f_\beta(s_2)-f_\beta(s_1)}{s_2-s_1}= \beta -\frac{f(s_2)-f(s_1)}{s_2-s_1}= \beta-f'(s_0)\leq \beta-\inf_{s\geq 0}f'(s),
\end{align}
\vspace{-0,1cm}
and 
\vspace{-0,2cm}
\begin{align}\label{ff2}
\frac{f_\beta(s_2)-f_\beta(s_1)}{s_2-s_1}\geq \beta-\Big(2\beta-\inf_{s\geq 0}f'(s)\Big)=-\beta+\inf_{s\geq 0}f'(s).
\end{align}
Finally, we conclude from (\ref{ff1}) and (\ref{ff2})  that
\begin{align*}
\left|f_\beta(s_2)-f_\beta(s_1)\right|\leq  \Big(\beta-\inf_{s\geq 0}f'(s)\Big)|s_2-s_1|,\quad s_1,s_2\in [0,M].
\end{align*}
\end{proof}
\vspace{-0.5cm}
\begin{remark}\label{not}
In order to get the uniqueness result, it is necessary  to assume that $f_\beta$  is a Lipschitzian function such that 
$$|f_\beta(s_1)-f_\beta(s_2)|\leq \Big(\beta-\inf_{s\geq 0}f'(s)\Big)|s_1-s_2|,\,\,s_1,s_2\geq 0.$$
We note that there is no loss of generality in assuming this condition because the proof of the uniqueness in \cite{agt}  compares two solutions $\phi_1$ and $\phi_2$, which are uniformly bounded on $\R$ by  $M:=\max\{\sup_{t\in \R}\phi_1(t),\sup_{t\in \R}\phi_2(t)\}$, and only  involves the values of $f_\beta(\phi_j(s))$.
\end{remark}

 \begin{thm} \label{mth1}  Assume  {\bf{$H_1$}} - {\bf{$H_3$}}   and suppose that $g$ satisfies the condition (\ref{lip}). Let $\phi$ be  a semi-wavefront  solution of  (\ref{i2}) with speed $c'$. If there exists $m\in (0,\gamma_K(c'))$ such that 
$\chi_L(m,c')\leq 0$, then   $\phi$ is  the  unique semi-wavefront  solution of  (\ref{i2}) (modulo translation).  Moreover,  the uniqueness also holds for each semi-wavefront solution with speed $c> c'$. 
  \end{thm}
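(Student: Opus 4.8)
The plan is to transfer the problem to the scalar nonlinear convolution equation (\ref{dk1}) and then invoke the abstract uniqueness theory of \cite{agt}, whose structural hypotheses have already been verified piecemeal in the Preliminaries. First I would fix the auxiliary constant $\beta$. Given two semi-wavefront profiles $\phi_1,\phi_2$ of (\ref{i2}) sharing a common admissible speed $c\ge c'$, set $M:=\max\{\sup_{t\in\R}\phi_1(t),\sup_{t\in\R}\phi_2(t)\}<\infty$ and, by Lemma \ref{beta}, choose $\beta>f'(0)$ large enough that $f_\beta\ge 0$ on $[0,M]$ and $f_\beta$ is Lipschitz there with constant $\beta-\inf_{s\ge 0}f'(s)$; by Remark \ref{not} this Lipschitz bound may be regarded as global without loss of generality, since the uniqueness argument only evaluates $f_\beta$ on the range of $\phi_1,\phi_2$. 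With this $\beta$, both $\phi_1$ and $\phi_2$ are positive, bounded solutions of (\ref{dk1}) vanishing at $-\infty$, relative to the two-point measure space $(X,\rho)$, kernel $\mathcal N$ and nonlinearity $g(\cdot,\tau)$ introduced in Section 2.

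Next I would check that the hypotheses of the uniqueness theorem of \cite{agt} are met. The convergence and domination estimates \textbf{(EC$_{\gamma_\phi}$)}, \textbf{(SB)}, \textbf{(SB*)} and \textbf{(EC*)} are exactly the three assertions of Lemma \ref{Hip}. On the nonlinearities, condition (\ref{lip}) gives $|g(s_1,\tau_1)-g(s_2,\tau_1)|\le L|s_1-s_2|$ and, since $g(0)=0$, the one-sided bound $g(s,\tau_1)=g(s)\le Ls$; for $\tau_2$ the choice of $\beta$ yields both $|f_\beta(s_1)-f_\beta(s_2)|\le(\beta-\inf_{s\ge0}f'(s))|s_1-s_2|$ and $f_\beta(s)\le(\beta-\inf_{s\ge0}f'(s))s$. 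Hence $g(s,\tau)\le\lambda(\tau)s$ for $s\ge 0$, with $\lambda$ the measurable function defined just before Lemma \ref{zero}; this is precisely the control of the nonlinearity by its linearization that the theory requires.

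The decisive step is to pin down the decay rate of every admissible profile at $-\infty$. Here I would feed the hypothesis $\chi_L(m,c')\le 0$, $m\in(0,\gamma_K(c'))$, together with the sublinear bounds just obtained, into Lemma \ref{zero}(iii). That lemma then returns, for every admissible $c\ge c'$, the equality $\lambda_1(c)=\gamma_\phi(c)\le m<\gamma_K(c)$: the abscissa of convergence $\gamma_\phi(c)$ of $\mathcal L\phi$ coincides with the minimal positive zero $\lambda_1(c)$ of $\chi_0(\cdot,c)$ (equivalently of $\chi$, via (\ref{p})), and this zero lies \emph{strictly} below the kernel abscissa $\gamma_K(c)$. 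Thus any two semi-wavefronts of a given admissible speed have the same abscissa of convergence and the same governing characteristic root, located strictly in the interior of the convergence strip — the nondegeneracy that the comparison in \cite{agt} exploits.

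With a common, interior decay rate fixed and all domination and Lipschitz hypotheses verified, the uniqueness theorem of \cite{agt} applies to (\ref{dk1}): it compares $\phi_1$ and $\phi_2$ through their bilateral Laplace transforms and a sliding argument controlled by $\lambda(\tau)$, and concludes $\phi_1(\cdot)=\phi_2(\cdot+h)$ for some $h\in\R$. Taking $\phi_1=\phi$ of speed $c'$ and $\phi_2$ an arbitrary competitor of speed $c'$ gives uniqueness at $c'$; for any admissible $c>c'$, Lemma \ref{zero}(iii) again supplies $\lambda_1(c)=\gamma_\phi(c)<\gamma_K(c)$, so the same argument yields uniqueness at each such $c$. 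The main obstacle is conceptual rather than computational: one must see why the single inequality $\chi_L(m,c')\le 0$ can replace the classical requirement $L=g'(0)$. Its force is that, through Lemma \ref{zero}(iii) (resting on \cite[Lemma 6, p.88]{agt}), it forces $\gamma_\phi=\lambda_1$ to be the minimal zero of $\chi$ and to sit strictly inside $[0,\gamma_K)$, and this strict interiority is exactly what allows the comparison argument of \cite{agt} to tolerate a Lipschitz constant $L>g'(0)$ together with an asymmetric kernel.
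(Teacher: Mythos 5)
Your reduction to (\ref{dk1}), the choice of $\beta$ via Lemma \ref{beta} and Remark \ref{not}, and the verification of the domination/smallness conditions all match the paper's Step I. The genuine gap is at the final invocation: \cite{agt} contains \emph{two} distinct uniqueness theorems (reproduced as Theorems \ref{agt1} and \ref{agt2} in the Appendix), and you verify the key hypothesis of the wrong one for the case that motivates this paper. Theorem \ref{agt1} does use the strict interiority $\gamma_\phi(c)<\gamma_K(c)$ (it is part of {\bf{(SB)}}, obtained exactly as you say from Lemma \ref{zero}(iii)), but it simultaneously demands the Lipschitz bound with constant \emph{exactly} $g'(0,\tau)$, i.e. $L=g'(0)$ for the $\tau_1$ component; so your closing claim that strict interiority is ``exactly what allows the comparison argument of \cite{agt} to tolerate $L>g'(0)$'' is backwards, and with $L>g'(0)$ your argument fails at the last step. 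The theorem that tolerates $\lambda(\tau)\neq g'(0,\tau)$ is Theorem \ref{agt2}, whose decisive hypothesis is not $\gamma_\phi<\gamma_K$ (which it does not require at all) but rather $\tilde\chi(m)=\chi_L(m)\geq 0$ for some $m\in(0,\lambda_{2K})$, with $\lambda_{2K}=\lambda_2(c')$ the second positive zero of $\chi$. You never verify this: one must translate $\chi_L(m,c')\leq 0$ into $\chi_L(m)\geq 0$ through the sign identity (\ref{L}) (valid because $\beta+c'm-m^2>0$ for $m<\mu(c')$), and then check $m<\lambda_2(c')$, which follows from $\chi_0(m,c')<\chi_L(m,c')\leq 0$ together with the strict convexity of $\chi_0(\cdot,c')$ established in Lemma \ref{c00}, so that $m$ sits strictly between the two roots.

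Moreover, a case split is unavoidable and is absent from your proposal. Theorem \ref{agt2} explicitly requires $\lambda(\tau)$ \emph{different} from $g'(0,\tau)$; when $f'(0)=\inf_{s\geq 0}f'(s)$ one has $\lambda(\tau_2)=\beta-\inf_{s\geq 0}f'(s)=f_\beta'(0)=g'(0,\tau_2)$, so Theorem \ref{agt2} is unavailable, and the paper then (Step III, where necessarily $L=g'(0)$) falls back on Theorem \ref{agt1} --- this is the only place where Lemma \ref{zero}(iii) and $\lambda_1(c)=\gamma_\phi(c)\leq m<\gamma_K(c)$ actually enter, covering in particular the critical (minimal-speed) wave. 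By treating \cite{agt} as a single theorem and making interior decay the universal mechanism, you miss both the dichotomy $f'(0)=\inf_{s\geq 0}f'(s)$ versus $f'(0)\neq\inf_{s\geq 0}f'(s)$ and the hypothesis $\tilde\chi(m)\geq 0$, $m\in(0,\lambda_{2K})$, that carries the non-critical case; the paper's propagation to speeds $c>c'$ in that case likewise runs through the monotonicity $\chi_L(m,c)<\chi_L(m,c')\leq 0$ and $m<\lambda_2(c)$, not through Lemma \ref{zero}(iii). As written, the proof is incomplete precisely when $L>g'(0)$ or $f'(0)\neq\inf_{s\geq 0}f'(s)$, which is the novelty of the theorem.
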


\begin{proof}
The  proof will be divided into 3  steps.\\
\underline{Step I}. 
Our proof starts  by observing that    the Lipschitz condition (\ref{lip}) and Remark \ref{not} allow to assume that
$$|g(s_1,\tau)-g(s_2,\tau)|\leq \bar\lambda(\tau)|s_1-s_2|,\,\, s_1,s_2\geq 0,\,\,\tau\in X,$$
where  $\bar\lambda(\tau)=\lambda(\tau)$, if $ f'(0)\not=\inf_{s\geq 0}f'(s)$ and  $\bar\lambda(\tau)=g'(0,\tau)$, otherwise.

On the other hand, since $f$ and $g$ satisfy condition {\bf{ $H_3$}} and $g(0)=f(0)=0$, there exist  appropriate $ C_1,C_2,\sigma>0$  such that 
$$|g(u)-g'(0)u|\leq C_1u^{\alpha+1},\quad | f_\beta(u)-f_\beta'(0)u|=| f(u)-f'(0)u|\leq C_2u^{\alpha+1},\,\,u\in (0,\sigma),$$
and hence $g(s,\tau)$ satisfies  
$$|g(u,\tau)- g'(0,\tau)u| \leq C(\tau)u^{1+\alpha}, \ u \in (0,\sigma),$$ where the function $C(\tau)$ is constant on $X$. From this and Lemma \ref{Hip}  we see that the assumptions  {\bf {(SB*)}}, {\bf{ (EC*)}}, {\bf {(EC$_{\gamma_\phi}$)}} and  {\bf {(SB)}} (except $\gamma_\phi(c) < \gamma_K(c)$) of  \cite{agt} hold. \\
\underline{Step II}. Now suppose that $f'(0)\not=\inf_{s\geq 0}f'(s)$. Then $\chi_0(z,c')<\chi_L(z,c')$ and since  $\chi_L(m,c')\leq 0$ for some $m\in (0,\gamma_K(c'))$, we have $m<\lambda_2(c')$, if $\lambda_2(c')$ exists. Thus the function $\chi_L(m)\geq 0$ for $m\in (0,\lambda_2(c'))$, and since 
$\chi(0)<0$,  \cite[Theorem 4, p. 96]{agt} (see Theorem \ref{agt2} in Appendix)  implies  that  $\phi$ is unique (modulo translation).  In addition,  we also obtain  the uniqueness  of semi-wavefront solution of  (\ref{i2})  for each admissible wave speed $c>c'$, because $\chi_L(m,c)<\chi_L(m,c')$ for $c>c'$ and if $\lambda_2(c)$ exists, then $m<\lambda_2(c)$.\\
\underline{Step III}. In the case, $f'(0)=\inf_{s\geq 0}f'(s)$ and $L=g'(0)$,  we have  $\chi_L(z,c')=\chi_0(z,c')$ and $\chi_0(m,c')\leq 0$.  Note further that   $g(s,\tau)\leq \lambda(\tau)s, s\geq 0$,  and hence  $$\lambda_1(c)=\gamma_\phi(c) \leq m<\gamma_K(c)$$ for each admissible wave speed $c\geq c'$, by Lemma \ref{zero}. Consequently, \cite[Theorem 3, p. 91]{agt} (see Theorem \ref{agt1} in Appendix)  implies the uniqueness (modulo translation) of   semi-wavefront solution of  (\ref{i2}) with speed $c\geq c'$. This completes the proof.
\end{proof}

 \begin{proof}[Proof of Theorem \ref{2}]
  Note that   Lemma \ref{c00}  guarantees the existence of the minimal numbers  $c_\star$ for which  $\chi_L(z,c)$ has at least one positive zero $z=\gamma_1(c)\in(0,\gamma^\#(c))$ for all $c> c_\star$ and for $c\geq c_\star$, if $ \chi_L(\gamma^\#(c_\star)-,c_\star)\not=0$. When $f'(0)=\inf_{s\geq 0}f'(s)$ and $L=g'(0)$, we have  $c_\star=c_*$,  otherwise  $c_\star>c_*$. We observe that  if $c$ is an admissible wave speed, then $c\geq c_*$, by Theorem \ref{3}. 
  
  Next,  let  $c\geq c_\star$ be an admissible wave speed. If $\gamma^\#(c_\star)=+\infty$, then we  have  $\gamma^\#(c)=+\infty$,  and  hence $\gamma_K(c)= \mu(c)$.  Since $\gamma_1(c)<\mu(c)$, by Lemma \ref{c00}, it follows that  $\gamma_1(c)\in (0,\gamma_K(c))$.   In the case  $\gamma^\#(c_\star)<+\infty$,  we see that  either $\gamma_K(c)=\mu(c)$ or $\gamma_K(c)=\gamma^\#(c)$. In both cases we conclude that  $\gamma_1(c)<\gamma_K(c)$ for each admissible wave speed $c\geq c_\star$, if $\chi_L(\gamma^\#(c_\star)-,c_\star)\not= 0$, and  for each admissible $c> c_\star$, if $\chi_L(\gamma^\#(c_\star)-,c_\star)= 0$. Thus Theorem \ref{mth1} implies the uniqueness (modulo translation) the semi-wavefront solution  to  (\ref{i1}) with speed $c\geq c_\star$, if $\chi_L(\gamma^\#(c_\star)-,c_\star)\not= 0$, and with speed  $c>c_\star$,  otherwise.
   \end{proof}
 
 \section {Applications.}
 In this section, we apply Theorem \ref{2}  to some  non-local  reaction-diffusion  epidemic and population models  with distributed time  delay, studied in \cite{AF,FWZ,gouk,OG,tz,wlr2,wl2,wl3,xz}.   

  \vspace{0,3cm}
 {\bf{ An application to  the epidemic dynamics:}} Consider the following reaction-diffusion model with distributed delay
\begin{equation}\label{sy4}
\left\{\begin{array}{c}u_t(t,x) =  du_{xx}(t,x)- f(u(t,x)) + \int_{\R}K(x-y)v(t,y)dy \\\\
v_t(t,x) =  - \alpha v(t,x)+\int_0^\infty g(u(t-s,x))P(ds),  \quad\quad\quad\quad\end{array}\right.
\end{equation}
where $\alpha, d>0$, $x\in \R, t\geq 0$, and $P$ is a probability measure on $\R_+$. The functions $u(t,x)$ and $v(t,x)$ denote the  densities of the infectious agent and the infective human population at a point $x$ in the habitat at time $t$, respectively (see \cite{tz,wl2,wl3,xz}). Note that system (\ref{sy4}) can be seen as a generalization of the systems studied  in the cited works. However, here the nonnegative  kernel $K$ can be asymmetric and normalized by $\int_\R K(w)dw=1$,  and the function $g$ can be non-monotone. By scaling the variables, we can suppose that $d=1$.

Now, suppose that $(u(t,x),v(t,x)) =(\phi(x+ct),\psi(x+ct))$ is a semi-wavefront solution of system (\ref{sy4}) with speed $c$, i.e. the continuous non-constant  uniformly bounded functions $u(t,x)=\phi(x+ct)$  and $v(t,x)=\psi(x+ct)$ are positives  and  satisfy the condition $\phi(-\infty)=\psi(-\infty)=0$.  Then  the wave profiles $\phi$ and $\psi$ must satisfy the following system:
\begin{equation}\label{sy5}
\left\{\begin{array}{c}
\phi''(t)-c\phi'(t)- f(\phi(t)) + \int_{\R}K(u)\psi(t-u)du=0 \\\\
c\psi'(t) +\alpha \psi(t)-\int_0^\infty g(\phi(t-cs))P(ds)=0.  \quad\quad\end{array}\right.
\end{equation}
Integrating the second equation of system (\ref{sy5}) between $-\infty$ and $t$, we find  that $\psi$ satisfies 
\begin{align*}
\psi(t)&=\frac{1}{c}\int_0^\infty\int_0^\infty e^{-\frac{\alpha}{c} u}g(\phi(t-u-cr)) P(dr)du\\
&= \int_0^\infty\int_r^\infty e^{-\alpha(w-r)}g(\phi(t-cw)) dw P(dr)\\
&=\int_0^\infty\int_0^w e^{-\alpha(w-r)}g(\phi(t-cw))  P(dr)dw\\
&=\int_0^\infty g(\phi(t-cw))K_2(w) dw,\,\, c\not=0,
\end{align*} 
where
$$K_2(w)=\int_0^w e^{-\alpha(w-r)} P(dr).$$
Note that if $c=0$, then $\alpha \psi(t)=g(\phi(t))$.
Now, if we rewrite the first equation of system (\ref{sy5}) as  (\ref{ii3}), then  $\phi(t)$  should satisfy the  integral equation
\begin{align*}  
\phi(t)&=\frac{1}{\sigma(c)}\left(
\int_{-\infty}^te^{\nu(c)(t-s)}(\mathcal G\phi)(s)ds
+\int_t^{+\infty}e^{\mu(c)(t-s)}(\mathcal G\phi)(s)ds\right)\\\nonumber
&= \int_\R k_1(t-s)(\mathcal G\phi)(s)ds,
\end{align*}
where $$k_1(s)=(\sigma(c))^{-1}\left\{\begin{array}{cc}e^{\nu(c) s}, & s\geq 0 \\e^{\mu(c) s}, & s<0\end{array}\right.,$$ $\sigma(c)=\sqrt{c^2+4\beta}$,  $\nu(c)<0<\mu(c)$ are the roots of  $z^2-cz-\beta=0$
 and  the operator $\mathcal G$ is defined as $$(\mathcal G\phi)(t):= \int_{\R}K(u)\psi(t-u)du+f_\beta(\phi(t)),\quad f_\beta(s)=\beta s-f(s), \beta>f'(0).$$ 
 In consequence,  
 \begin{align*} 
\phi(t)&= \int_\R k_1(t-s) \left(\int_{\R}K(u)\psi(s-u)du+f_\beta(\phi(s))\right)ds\\
&=\int_\R k_1(t-s) \left(\frac{1}{\alpha}\int_0^\infty\int_{\R} \bar K(w,u) g(\phi(s-cw-u))dudw+f_\beta(\phi(s))\right)ds,
\end{align*}
 where $\bar K(w,u)=\alpha K(u)K_2(w)$, $u\in \R, w\in [0,\infty)$ and $c\not=0$. Since 
 \begin{align*} 
\frac{1}{\alpha}\int_0^\infty\int_{\R} \bar K(w,u) g(\phi(t-cw-u))dudw&=\frac{1}{\alpha}\int_\R g(\phi(t-r))\int_0^\infty \bar K(s,r-cs)ds dr\\
&=\int_\R g(\phi(t-r))k_2(r) dr,
\end{align*}
where $k_2(r)=\frac{1}{\alpha}\int_0^\infty \bar K(s,r-cs)ds$, the profile $\phi$ also must satisfy the equation
\begin{align*}
\nonumber \phi(t)&=(k_1*k_2)*g(\phi)(t)+ k_1* f_\beta(\phi)(t).
 \end{align*}
A similar argument can be applied when $c=0$.
 
 Next, observe here that the characteristic function $\chi$ becomes: 
 \begin{align}\label{ppp}\nonumber
\chi(z)&=1-g'(0)\int_\R (k_1*k_2)(s)e^{-zs}ds-(\beta-f'(0))\int_\R k_1(s)e^{-zs}ds\\\nonumber&= 1-\frac{\beta-f'(0)}{\beta+cz-z^2}-\frac{g'(0)}{\beta+cz-z^2}\frac{\int_0^\infty e^{-zcr}P(dr)}{cz+\alpha}\int_\R K(w)e^{-zw}dw\\&
=-\frac{z^2-cz-f'(0)+\frac{g'(0)}{cz+\alpha}\int_0^\infty e^{-zcr}P(dr)\int_\R K(w)e^{-zw}dw}{\beta+cz-z^2}
\end{align}
when  $cz+\alpha>0$. Consequently, from (\ref{ppp}) we obtain
\begin{equation*}\label{c1}
\chi_0(z,c)=z^2-cz-f'(0)+\frac{g'(0)}{cz+\alpha}\int_0^\infty e^{-zcr}P(dr)\int_\R K(w)e^{-zw}dw.
\end{equation*}
Similarly, we get  that 
\begin{equation*}\label{c2}
\chi_L(z,c)=z^2-cz-\inf_{s\geq 0}f'(s)+\frac{L}{cz+\alpha}\int_0^\infty e^{-zcr}P(dr)\int_\R K(w)e^{-zw}dw. 
\end{equation*}
In this way, let $c_*$ and $c_\star$ be the minimal value of $c$ for which $\chi_0(z,c)=0$ and $\chi_L(z,c)=0$ have at least one positive root,  respectively. Then  we can now formulate the following result:
\begin{thm}\label{apli1}
Let  assumptions {\bf{$H_1$}} - {\bf{$H_3$}} hold.  Suppose further that for any $c\in \R$, there exists some $\gamma^\#=\gamma^\#(c)\in (0,+\infty]$ such that  $\chi_0(z,c)<\infty$ for each $z \in [0,\gamma^\#)$ and  $ \chi_0(\gamma^\#(c)-,c)=+\infty$. If $g$ satisfies the condition (\ref{lip}), then the 
 system (\ref{sy4}) admits at most one (modulo translation) semi-wavefront solution $$(u(t,x),v(t,x))=(\phi(x+ct),\psi(x+ct)), \,\, \phi(-\infty)=\psi(-\infty)=0,$$   for each admissible wave speed $c\geq c_\star$. 
Furthermore, the system (\ref{sy4}) has no any semi-wavefront solution propagating with speed $c<c_*$. \end{thm}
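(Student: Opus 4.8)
The plan is to read Theorem \ref{apli1} as a direct application of the general results, Theorems \ref{2} and \ref{3}, to the scalar fixed-point equation satisfied by the first profile $\phi$. The reduction has essentially been carried out in the paragraphs preceding the statement: integrating the second equation of (\ref{sy5}) expresses $\psi$ as a convolution of $g(\phi)$ against $K_2$, substituting this into the first equation and rewriting in the form $\phi=(k_1*k_2)*g(\phi)+k_1*f_\beta(\phi)$. The first step of the proof is therefore to observe that this is precisely an instance of equation (\ref{dk1}) with $\mathcal N(\cdot,\tau_1)=k_1*k_2$ and $\mathcal N(\cdot,\tau_2)=k_1$, where now $k_2(r)=\frac1\alpha\int_0^\infty \bar K(s,r-cs)\,ds$ encodes both the spatial kernel $K$ and the temporal probability measure $P$ through $\bar K(w,u)=\alpha K(u)K_2(w)$. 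In other words, the system behaves as a single equation of the type (\ref{i1}) with an effective kernel built from $K$ and $P$.

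Next I would verify that the structural hypotheses of the general theory hold for this effective kernel. Since $K\ge 0$ with $\int_\R K=1$ and $P$ is a probability measure, one has $K_2\ge 0$ with finite mass, so $\bar K\ge 0$ and the effective kernel is nonnegative and integrable; the nonlinearities $g$ and $f$ are unchanged, so $H_1$–$H_3$ are inherited verbatim. The characteristic-function computation (\ref{ppp}) already identifies $\chi_0(z,c)$ and $\chi_L(z,c)$ for the system with exactly the objects of the general framework, via $\chi(z)=-\chi_0(z,c)/(\beta+cz-z^2)$, so no new analysis of the characteristic equation is needed. One only records two system-specific points: first, the instability of the trivial state, $\chi_0(0,c)>0$, reads here $g'(0)>\alpha f'(0)$, the natural analog of the sign condition in $H_2$; second, the denominator $cz+\alpha$ produced by integrating the linear ODE in $\psi$ must stay positive on the range $z\in[0,\gamma^\#(c))$ where the representation of $\psi$, and hence the whole reduction, is valid.

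The remaining and most delicate point is the critical speed $c_\star$. Theorem \ref{2} yields uniqueness for all $c\ge c_\star$ only when $\chi_L(\gamma^\#(c_\star)-,c_\star)\neq 0$, and otherwise only for $c>c_\star$. Here I would exploit the standing hypothesis $\chi_0(\gamma^\#(c)-,c)=+\infty$: because $\chi_L$ differs from $\chi_0$ only by replacing $f'(0)$ with $\inf_{s\ge 0}f'(s)$ and $g'(0)$ with $L\ge g'(0)$ in the coefficient of the same positive, divergent integral term, the identical left limit gives $\chi_L(\gamma^\#(c_\star)-,c_\star)=+\infty\neq 0$. This places us in the favorable branch of Theorem \ref{2} and delivers uniqueness up to translation for every admissible $c\ge c_\star$. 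The non-existence statement for $c<c_*$ then follows immediately from Theorem \ref{3} applied to the same reduced equation.

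The main obstacle I anticipate lies not in invoking the general theorems but in the reduction itself: one must justify that integrating the $\psi$-equation genuinely produces a convolution kernel of the double-integral form demanded by (\ref{i1})/(\ref{dk1}), and that this representation, together with the convergence of the relevant transforms and the positivity $cz+\alpha>0$, holds throughout $[0,\gamma^\#(c))$ — including the borderline case $c=0$, which must be treated separately via the relation $\alpha\psi(t)=g(\phi(t))$.
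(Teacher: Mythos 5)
Your proposal follows essentially the same route as the paper: the reduction of (\ref{sy4}) to the convolution form $\phi=(k_1*k_2)*g(\phi)+k_1*f_\beta(\phi)$ is the one carried out in the text preceding the theorem, and your key observation---that $\chi_0(\gamma^\#(c_\star)-,c_\star)=+\infty$ together with $L\geq g'(0)$ and $\inf_{s\geq 0}f'(s)\leq f'(0)$ forces $\chi_L(\gamma^\#(c_\star)-,c_\star)=+\infty\neq 0$, placing the result in the favorable branch of Theorem \ref{2}, with Theorem \ref{3} giving non-existence for $c<c_*$---is exactly the paper's one-line argument. Your extra verifications (normalization of the effective kernel $\bar K$, positivity of $cz+\alpha$, the borderline case $c=0$) are sound elaborations of the same proof rather than a different approach.
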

\begin{proof} Note that the condition  $ \chi_0(\gamma^\#(c_\star)-,c_\star)=+\infty$ implies that $ \chi_L(\gamma^\#(c_\star)-,c_\star)\not=0$. Thus 
the proof is a direct consequence of  the Theorems (\ref{2}) and (\ref{3}). 
\end{proof}
\begin{remark}
Theorem \ref{apli1} complement or  improve  some results  of \cite{tz,wl2,wl3,xz}. In fact, in these references the monotone case was studied, except \cite{wl3}. Moreover,  in the mentioned papers, the Lipschitz condition (\ref{lip}) with $L=g'(0)$  was  assumed and the uniqueness of the slowest semi-wavefront was not studied.  It should be noted that in \cite{wl2,xz}, isotropic kernels were  considered.
\end{remark}

 \vspace{0,3cm}
{\bf{An application to the population dinamics:}}  Let $u$ and $v$ denote the numbers of mature and immature population of a single species at time $t\geq 0$, respectively. Then Aiello and Freedman \cite{AF} proposed that the population growth can be modeled by the following system:
\begin{equation}\label{sy}
\left\{\begin{array}{c}u'(t) =  \alpha e^{-\gamma \tau} u(t-\tau)- \beta u^2(t) \quad \quad \quad\\\
v'(t) =  \alpha u(t)-\gamma v(t)- \alpha e^{-\gamma \tau} u(t-\tau),\end{array}\right.
\end{equation}
 where $\alpha,\beta,\gamma, \tau>0$. The delay $\tau$ is the time taken from birth to maturity. Death of immature and  mature are  modeled, respectively, by the  $- \gamma v(t,x)$ and $- \beta u^2(t)$ terms. The $\alpha u(t)$ term denotes the rate at which individuals are born. The term $\alpha e^{-\gamma \tau} u(t-\tau)$ represents the rate  at which individuals leave the immature and enter the mature class. When the  individuals are allowed to move around, Gourlley and Kuang \cite{gouk} introduced a diffusive term to the model (\ref{sy}). To improve population model of \cite{AF,gouk}, Olmari and Gourley \cite{OG} proposed the following nonlocal reaction-diffusion system with distributed time delay:
 \begin{align}\label{sy1}
\left\{\begin{array}{c}u_t(t,x) =  du_{xx}(t,x)- \beta u^2(t,x)) +\alpha \int_0^\infty\int_{\R}K(s,y)u(t-s,y)e^{-\gamma s}f(s)dyds \quad \quad\quad\quad  
\\\\
v_t(t,x) =  Dv_{xx}(t,x)- \gamma v(t,x)+\alpha u(t,x)-\alpha \int_0^\infty\int_{\R}K(s,y)u(t-s,y)e^{-\gamma s}f(s)dyds,\end{array}\right.
\end{align}
where $$K(s,y)=\frac{1}{\sqrt{4\pi Ds}}e^{\frac{-(x-y)^2}{4Ds}},$$
and $u(t,x)$ and $v(t,x)$ denote the density of the mature and immature population of a single species at time $t\geq 0$ and location $x$, respectively. Fang et al. \cite{FWZ} proposed a generalization for (\ref{sy1}) with a general isotropic kernel $K$.

Following \cite{FWZ}, here will study the system 
\begin{equation}\label{sy3}
\left\{\begin{array}{c}u_t(t,x) =  du_{xx}(t,x)- f(u(t,x)) + \int_0^\infty\int_{\R}K(s,w)g(u(t-s,x-w))dwds\quad\quad\quad\quad \\\\
v_t(t,x) =  Dv_{xx}(t,x)- \gamma v(t,x)+g(u(t,x)) -\int_0^\infty\int_{\R}K(s,w)g(u(t-s,x-w))dwds,\end{array}\right.
\end{equation}
which generalize (\ref{sy1}). Here $\gamma, D,d>0$  and  the nonnegative kernel $K$ can be asymmetric. Note that by scaling the variables, we can suppose that $d=1$. When $g(t)=\alpha t$, $K$ satisfies $K(s,w)=K(s,-w)$ and $ \int_0^\infty\int_{\R}K(s,w)<1$, the existence and nonexistence of traveling waves solution of  the
 system (\ref{sy3}) was proved in \cite{FWZ}.  
 Now, observe that in the system (\ref{sy3}) the first equation can be solved independently of the second. In this way,  if the 
 system (\ref{sy3}) admits   a semi-wavefront solution $$(u(t,x),v(t,x))=(\phi(x+ct),\psi(x+ct)),\,\,\phi(-\infty)=\psi(-\infty)=0,$$  with speed $c$, then $v(t,x)=\psi(x+ct)$ must satisfy the immature equation 
 \begin{align*}
D\psi''(t)-c\psi'(t)- \gamma \psi(t)+(\mathcal H\phi)(t)=0,
\end{align*}
where the operator $\mathcal H$ is defined by
 $$(\mathcal H\phi)(t)=g(\phi(t))-\int_0^\infty\int_{\R}K(s,w)g(\phi(t-cs-w))dwds.$$ Since $ \phi$ is  bounded, we get that $\psi$ is represented  by 
\begin{align*}
\psi(t)=\int_\R k_1(t-s)(\mathcal H\phi)(s)ds=\int_\R k_1(s)(\mathcal H\phi)(t-s)ds,
\end{align*}
where $$k_1(s)=\left(\sqrt{c^2+4D\gamma}\right)^{-1}\left\{\begin{array}{cc}e^{\tilde\nu(c) s}, & s\geq 0 \\e^{\tilde\mu(c) s}, & s<0\end{array}\right.$$
 and $\tilde\nu(c)<0<\tilde\mu(c)$ are the roots of  $Dz^2-cz-\gamma=0$. 

Finally, consider  the characteristic functions $\chi_0(z,c)$ and $\chi_L(z,c)$  associated with the mature equation of system (\ref{sy3})
 and  $c_*,c_\star$  defined in Section \ref{into}. Then the following theorem  is a direct consequence of  the Theorems (\ref{2}) and (\ref{3}). 

\begin{thm}\label{un}
Let  assumptions   {\bf{$H_1$}} - {\bf{$H_3$}} hold.  Suppose further that for any $c\in \R$, there exists some $\gamma^\#=\gamma^\#(c)\in (0,+\infty]$ such that  $\chi_0(z,c)<\infty$ for each $z \in [0,\gamma^\#)$ and  $ \chi_0(\gamma^\#(c)-,c)=+\infty$. If $g$ satisfies the condition (\ref{lip}), then the 
 system (\ref{sy3}) admits   at most one   (modulo translation) semi-wavefront solution $$(u(t,x),v(t,x))=(\phi(x+ct),\psi(x+ct)),\,\,\phi(-\infty)=\psi(-\infty)=0,$$  for each admissible wave speed $c\geq c_\star$. 
 Furthermore,  the system (\ref{sy3}) has no any semi-wavefront solution propagating with speed $c<c_*$. 
\end{thm}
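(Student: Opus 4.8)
The plan is to reduce Theorem \ref{un} to the two main theorems by exploiting the triangular structure of the system (\ref{sy3}). First I would observe that the mature equation (the first line of (\ref{sy3})) decouples completely from the immature equation: after the scaling $d=1$ it is \emph{exactly} equation (\ref{i1}) with the same $f$, $g$ and (possibly asymmetric) kernel $K$, and accordingly the associated characteristic functions $\chi_0(z,c)$ and $\chi_L(z,c)$ are precisely those from Section \ref{into}. Consequently, if $(u,v)=(\phi(x+ct),\psi(x+ct))$ is a semi-wavefront of the system, then $\phi$ is a semi-wavefront profile of (\ref{i1}) in the sense of (\ref{i2}), i.e.\ a bounded positive non-constant solution with $\phi(-\infty)=0$. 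Thus the admissible wave speeds of the system coincide with those of the scalar mature equation, and the uniqueness and non-existence questions for the pair $(\phi,\psi)$ are settled once they are settled for $\phi$ alone.

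Second, I would record the hypothesis-matching step, since the standing assumption of Theorem \ref{un} is $\chi_0(\gamma^\#(c)-,c)=+\infty$ whereas Theorem \ref{2} is phrased through $\chi_L(\gamma^\#(c_\star)-,c_\star)$. Here $\chi_L$ dominates $\chi_0$ pointwise on their common interval of convergence, the two differing only by $(L-g'(0))$ times the integral in (\ref{conv}) plus $(f'(0)-\inf_{s\geq 0}f'(s))$, a sum of nonnegative terms because $L\geq g'(0)$ and $f'(0)\geq \inf_{s\geq 0}f'(s)$. Hence $\chi_L(\gamma^\#(c_\star)-,c_\star)\geq \chi_0(\gamma^\#(c_\star)-,c_\star)=+\infty\neq 0$, independently of whether $\gamma^\#(c_\star)$ is finite. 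We are therefore in the branch $\chi_L(\gamma^\#(c_\star)-,c_\star)\neq 0$ of Theorem \ref{2}, which yields uniqueness (modulo translation) of the mature profile $\phi$ for every admissible speed $c\geq c_\star$, while Theorem \ref{3} rules out any semi-wavefront of the mature equation, and hence of the system, for $c<c_*$.

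Third, I would upgrade uniqueness of $\phi$ to uniqueness of the full profile $(\phi,\psi)$. Since $\phi$ is bounded, the immature component is recovered by the explicit Green's representation displayed just above the statement, $\psi(t)=\int_\R k_1(s)(\mathcal H\phi)(t-s)\,ds$, where both the operator $\mathcal H$ and the kernel $k_1$ depend only on $\phi$, $f$, $g$, $K$, $\gamma$ and $D$. Because $\mathcal H$ and convolution with $k_1$ commute with translations of the argument, replacing $\phi(\cdot)$ by $\phi(\cdot+a)$ produces exactly $\psi(\cdot+a)$; thus the map $\phi\mapsto\psi$ is single-valued and translation-equivariant. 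Two semi-wavefronts of the system sharing a common admissible speed $c\geq c_\star$ therefore have mature profiles $\phi_1,\phi_2$ that agree up to one shift $a$ by the scalar uniqueness, and the associated immature profiles $\psi_1,\psi_2$ agree under the same shift, so the pair is unique modulo a single common translation.

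The main obstacle is essentially bookkeeping rather than analysis: the only genuinely new points are verifying that the decoupling is exact so that the scalar theorems apply verbatim, and that the endpoint hypothesis $\chi_0(\gamma^\#(c_\star)-,c_\star)=+\infty$ lands in the correct branch $\chi_L(\gamma^\#(c_\star)-,c_\star)\neq 0$ of Theorem \ref{2}. Everything else, namely the well-definedness and boundedness of $\psi$ through the $k_1$-representation together with its translation-equivariance, is routine and follows directly from the construction already carried out for the system before the statement of the theorem.
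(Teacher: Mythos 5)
Your proposal is correct and takes essentially the same route as the paper: the mature equation of (\ref{sy3}) decouples and is exactly (\ref{i1}), the endpoint hypothesis $\chi_0(\gamma^\#(c_\star)-,c_\star)=+\infty$ forces $\chi_L(\gamma^\#(c_\star)-,c_\star)\neq 0$ (the very observation the paper records in its proof of Theorem \ref{apli1}), so the conclusion is a direct consequence of Theorems \ref{2} and \ref{3}. Your third step, spelling out that the bounded Green's-function representation $\psi(t)=\int_\R k_1(s)(\mathcal H\phi)(t-s)\,ds$ is single-valued and translation-equivariant, merely makes explicit what the paper leaves implicit.
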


\begin{remark}
We note that Theorem \ref{un}  complements or improves some results  of  \cite{FWZ, gouk,tz,wlr2}, where the non-existence or the uniqueness  was established under assumptions  that $K$ is Gaussian or symmetric kernel,  and $g$ monotone.  In \cite{gouk,wlr2} only  the particular cases $f(s)=\beta s^2$ and $g(s)=s$, were studied, and in \cite{tz}, the assumptions were either $f(s)=f'(0)$ or $g(s)=g'(0)s$. Neither of this references considered the uniqueness of the minimal wave (see also \cite{OG}).
\end{remark}
\section{Appendix.} The following assumptions on $g$ and $\mathcal N$ were used in  \cite{agt}.
\begin{description}
\item[ {\bf{ (SB)}}] $\gamma_\phi < \gamma_K$ and, for some measurable  $C(\tau) >0$ and $\alpha, \sigma \in (0,1]$,
$$
|g'(0,\tau)-\frac{g(u,\tau)}{u}| \leq C(\tau)u^{\alpha}, \ u \in (0,\sigma), $$
measurable $C(\tau) >0$ satisfying (\ref{ze}).
\end{description}
\begin{description}
\item[ {\bf{ (EC$_\rho$)}}] For any $\rho < \gamma_\phi$ and  there exist  measurable 
$d_1, d_2, \  d_1d_2 \in L^1(X), $ such that  
$$
0\leq \mathcal  N(s,\tau) \leq d_1(\tau)e^{\rho s}, \ s \in \R,\  \tau \in X,  
$$
\begin{equation*}
|g(u,\tau)| \leq d_2(\tau)u, \ u \geq 0. 
\end{equation*}
\end{description}

\begin{description}
\item[ {\bf{ (SB*)}}] For some $\alpha, \sigma \in (0,1]$ and measurable $C(\tau) >0$ satisfying (\ref{ze}),
$$
|g'(u,\tau)- g'(0,\tau)| \leq C(\tau)u^{\alpha}, \ u \in (0,\sigma), 
$$
it holds. Furthermore, there exist  $\hat \epsilon \in (0,\gamma_\phi)$ and measurable 
$d_1(\tau)$ such that  
$$
0\leq \mathcal  N(s,\tau) \leq d_1(\tau)e^{\hat \epsilon s}, \ s \in \R. 
$$
\end{description}
\begin{description}
\item[ {\bf{ (EC*)}}] 
  There exists  $\delta_0 >0$ such that, for each $x \in (\lambda_{rK}-\delta_0, \lambda_{rK})$, it holds 
$$
0\leq \mathcal  N(s,\tau) \leq d_{2x}(\tau)e^{x s}, \ s \in \R, 
$$
for some $\mu-$measurable 
$d_{2x}(\tau)$.   
\end{description}

\vspace{0.1 cm}
The main results obtained in \cite{agt} are the following:
\begin{thm} \label{agt1}
Assume  {\bf {(SB)}}   as well as  {\bf {(EC$_{\gamma_\phi}$)}}    and suppose further that  $\chi(0)<0$,
\begin{equation*}
|g(u,\tau) - g(v,\tau)| \leq g'(0,\tau)|u-v|, \ u,v \geq 0.
\end{equation*}
Then equation (\ref{dk1}) has at most one bounded positive  solution $\varphi, \  \varphi(-\infty) =0$. 
\end{thm}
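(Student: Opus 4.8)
The plan is to run the Diekmann--Kaper comparison scheme adapted to the convolution equation (\ref{dk1}). Let $\varphi_1,\varphi_2$ be two bounded positive solutions with $\varphi_j(-\infty)=0$, and set $M:=\max\{\sup_{t\in\R}\varphi_1(t),\sup_{t\in\R}\varphi_2(t)\}$. Introduce the positive linear convolution operator
$$(\mathcal T w)(t):=\int_\R\int_X \mathcal N(s,\tau)\,g'(0,\tau)\,w(t-s)\,d\rho(\tau)\,ds,$$
whose symbol is the bilateral transform $(\mathcal L\mathcal N)(z)=1-\chi(z)$. The hypothesis $\chi(0)<0$ says $\mathcal T$ is supercritical at $z=0$. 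Write $\lambda_1:=\gamma_\phi$; since $\gamma_\phi$ is a singular point of $\mathcal L\varphi_j$ it is a zero of $\chi$, and since a positive solution decays at the slowest available exponential rate, $\lambda_1$ is in fact the minimal positive zero of $\chi$ (cf. Lemma \ref{zero}). Because $\chi=-\chi_0/(\beta+cz-z^2)$ has positive denominator and strictly convex numerator $\chi_0$ on $(0,\gamma_K)$, one has $\chi<0$ on $[0,\lambda_1)$ and $\chi>0$ on $(\lambda_1,\lambda_2)$, where $\lambda_2\in(\lambda_1,\gamma_K]$ is the second zero (when absent, $\chi>0$ on all of $(\lambda_1,\gamma_K)$). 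Equivalently $(\mathcal L\mathcal N)(\lambda)<1$ for $\lambda\in(\lambda_1,\lambda_2)$; this strict subcriticality just above the minimal root is what will force uniqueness, and $\lambda_1<\gamma_K$ is guaranteed by (SB).

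\textbf{Step 1 (exponential asymptotics).} I would first establish that each solution satisfies $\varphi_j(t)=A_j\,e^{\lambda_1 t}(1+o(1))$ as $t\to-\infty$ for some $A_j>0$ (with an extra factor $t$ if $\lambda_1$ is a double zero). The analytic input is already in place: Lemma \ref{ga} and Lemma \ref{34} identify $\gamma_\phi=\lambda_1$ as the abscissa of convergence of $\mathcal L\varphi_j$ and as a singular point (Widder), while $(\mathcal L\mathcal N)$ is finite and analytic near $\lambda_1$ because $\lambda_1<\gamma_K$. Splitting the nonlinearity as $g(\varphi_j,\tau)=g'(0,\tau)\varphi_j+r_j$ and transforming (\ref{dk1}) yields $(\mathcal L\varphi_j)(z)\,\chi(z)=(\mathcal L R_j)(z)$ with $R_j:=\mathcal N* r_j$; by the H\"older bound in (SB) one has $r_j=O(\varphi_j^{1+\alpha})$, so $\mathcal L R_j$ converges past $(1+\alpha)\lambda_1>\lambda_1$. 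An Ikehara/Tauberian argument at the singularity $z=\lambda_1$ then gives the leading asymptotics, and—since the next singularity of $\mathcal L\varphi_j$ sits at $\mu:=\min\{(1+\alpha)\lambda_1,\lambda_2\}$—the refined estimate $\varphi_j(t)-A_je^{\lambda_1 t}=O(e^{\lambda t})$ for any fixed $\lambda\in(\lambda_1,\mu)\subseteq(\lambda_1,\lambda_2)$.

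\textbf{Step 2 (normalization and contraction).} Under the translation $\varphi_2(\cdot)\mapsto\varphi_2(\cdot+\xi)$ the leading coefficient scales as $A_2\mapsto A_2e^{\lambda_1\xi}$, so I pick $\xi$ making the translated profile $\tilde\varphi_2$ share the coefficient $A_1$ of $\varphi_1$. Then $w:=\varphi_1-\tilde\varphi_2$ obeys $w(t)=O(e^{\lambda t})$ for some $\lambda\in(\lambda_1,\lambda_2)$. The Lipschitz hypothesis $|g(u,\tau)-g(v,\tau)|\le g'(0,\tau)|u-v|$ converts (\ref{dk1}) into the pointwise inequality $|w(t)|\le(\mathcal T|w|)(t)$. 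Setting $Q:=\sup_{t\in\R}|w(t)|e^{-\lambda t}$, which is finite by the decay just obtained and by (EC$_{\gamma_\phi}$), the estimate
$$|w(t)|\le\int_\R\int_X \mathcal N(s,\tau)\,g'(0,\tau)\,Q\,e^{\lambda(t-s)}\,d\rho(\tau)\,ds=Q\,e^{\lambda t}\,(\mathcal L\mathcal N)(\lambda)$$
gives $Q\le(\mathcal L\mathcal N)(\lambda)\,Q$. Since $(\mathcal L\mathcal N)(\lambda)=1-\chi(\lambda)<1$, this forces $Q=0$, hence $\varphi_1\equiv\tilde\varphi_2$; as (\ref{dk1}) is autonomous, this is the asserted uniqueness (necessarily modulo translation).

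\textbf{Main obstacle.} The contraction of Step 2 is essentially formal once the decay $w=O(e^{\lambda t})$ with $\lambda>\lambda_1$ is secured, so the whole argument rests on Step 1. Two points are delicate. First, the Tauberian passage from the located singularity of $\mathcal L\varphi_j$ to genuine pointwise asymptotics must be justified, together with the verification—via (SB) and $\lambda_1<\gamma_K$—that the nonlinear remainder $R_j$ really lives at a strictly higher exponential order and hence does not corrupt the leading singularity. Second, and more seriously, is the degenerate case in which $\lambda_1$ is a \emph{double} zero of $\chi$: then the interval $(\lambda_1,\lambda_2)$ collapses, $\chi\le 0$ on both sides of $\lambda_1$, matching the $t\,e^{\lambda_1 t}$ coefficients leaves $w$ only $O(e^{\lambda_1 t})$, and the contraction degenerates to the useless $Q\le Q$. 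Handling this critical/minimal-speed situation is exactly what forces a finer sliding or Phragm\'en--Lindel\"of argument exploiting the strict H\"older structure (SB*) and monotonicity, and it is the crux separating the present statement from the critical-case companion Theorem \ref{agt2}.
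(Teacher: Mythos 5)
The first thing to say is that the paper contains no proof of this statement: Theorem \ref{agt1} is imported verbatim from \cite[Theorem 3, p.~91]{agt} and stated in the Appendix only so that Step III of the proof of Theorem \ref{mth1} can invoke it, so your proposal can only be measured against that source and against the way the paper uses the theorem. In the generic configuration --- $\gamma_\phi=\lambda_1$ a simple zero of $\chi$ with $\chi>0$ on an interval $(\lambda_1,\lambda_2)$ --- your two steps do reconstruct the classical Diekmann--Kaper scheme that \cite{agt} generalizes: Ikehara-type asymptotics at the first singularity of $\mathcal L\varphi_j$, a translation matching the leading coefficients, and the weighted-sup (equivalently, Laplace-transform) contraction from $(\mathcal L\mathcal N)(\lambda)<1$ on $(\lambda_1,\lambda_2)$. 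Up to routine care (showing the limits $A_j$ are strictly positive, and justifying the bootstrap $\varphi_j(t)-A_je^{\lambda_1 t}=O(e^{\lambda t})$), that half is sound, and your reading of ``at most one'' as uniqueness modulo translation matches the paper's usage.

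The genuine gap is the case you relegate to the ``Main obstacle'' paragraph: $\lambda_1=\gamma_\phi$ a double zero of $\chi$, so that $\chi\le 0$ on all of $[0,\gamma_K)$ and your interval $(\lambda_1,\lambda_2)$ is empty. Nothing in the hypotheses of Theorem \ref{agt1} excludes this --- only (SB), (EC$_{\gamma_\phi}$), $\chi(0)<0$ and the sharp Lipschitz bound are assumed --- and it is precisely the case for which the present paper needs the theorem: in Step III of Theorem \ref{mth1} the condition $\chi_0(m,c')\le 0$ at the minimal speed forces $m$ to be the double zero, and the Remark after Theorem \ref{2} advertises uniqueness of the critical (minimal) semi-wavefront as the main advance over \cite{fz,tz}. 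Your disposal of this case is moreover backwards: Theorem \ref{agt2} is not the ``critical-case companion'' --- it assumes a Lipschitz weight $\lambda(\tau)$ \emph{different} from $g'(0,\tau)$ and requires $\tilde\chi(m)\ge 0$ for some $m\in(0,\lambda_{2K})$ below a genuine second zero, i.e.\ exactly the non-critical configuration, so it is Theorem \ref{agt1}, with the sharp constant $g'(0,\tau)$, that must absorb the double zero. The repair tools you name are also unavailable: (SB*) is a hypothesis of Theorem \ref{agt2}, not of Theorem \ref{agt1}, and monotonicity of $g$ is assumed nowhere in this framework, which is designed for non-monotone nonlinearities, so a sliding argument has no comparison principle to run on. As written, your argument therefore proves a strictly weaker statement; the double-zero case requires a genuinely different mechanism (in \cite{agt} it is extracted from a finer analysis of the inequality $|w|\le(\mathcal T|w|)$ combined with the $(A+B|t|)e^{\lambda_1 t}$ asymptotics, not from a contraction), and that mechanism is the actual content of the theorem.
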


\vspace{0.1 cm}
\begin{thm}\label{agt2}
 Assume {\bf {(SB*)}}, {\bf{ (EC*)}} and suppose that
\begin{equation*}
|g(u,\tau) - g(v,\tau)| \leq \lambda(\tau)|u-v|, \ u,v \geq 0, \tau \in X, 
\end{equation*}
for some measurable $\lambda(\tau)$ different from $g'(0,\tau)$  and that the
function 
$$
\tilde\chi(z) =1 - \int_\R \int_X \mathcal N(s,\tau)\lambda(\tau)d\rho(\tau)e^{-sz}ds 
$$
is well defined on $[0, \lambda_{2K})$. If, in addition,  $\lambda d_j \in L^1(X), \  j =1,2,$  $\chi(0) <0$ and
$\tilde\chi(m) \geq 0
$ for some $m \in (0, \lambda_{2K})$, 
then equation (\ref{dk1}) has at most one bounded positive  solution $\varphi, \  \varphi(-\infty) =0$. Here, $\lambda_{2K}$ is the second positive zero of $\chi(z)$, if exists.
\end{thm}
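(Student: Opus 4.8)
The plan is to run a Diekmann--Kaper type argument (\cite{dk}) adapted to the abstract convolution equation (\ref{dk1}). Suppose $\varphi_1$ and $\varphi_2$ are two bounded positive solutions with $\varphi_j(-\infty)=0$; I must show they coincide after a translation. Taking $v=0$ in the Lipschitz hypothesis together with $g(0,\tau)=0$ yields the sublinear bound $g(u,\tau)\le\lambda(\tau)u$ for $u\ge0$, so by the singularity analysis of $\mathcal L\varphi_j$ already used in the proof of Lemma \ref{zero} the abscissa of convergence $\gamma_{\varphi_j}$ coincides with the minimal positive zero $\lambda_{1K}$ of $\chi$. Since the hypotheses place us in the regime where $\chi$ possesses a genuine second zero $\lambda_{2K}>\lambda_{1K}$, I am in the non-critical (fast) situation and $\lambda_{1K}$ is a simple singularity of each $\mathcal L\varphi_j$.

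First I would pin down the leading asymptotics at $-\infty$. Writing $g(\varphi_j,\tau)=g'(0,\tau)\varphi_j+R_j$, condition \textbf{(SB*)} and the H\"older control of $g'$ near $0$ coming from \textbf{$H_3$} give $R_j(t,\tau)=O(\varphi_j(t)^{1+\alpha})$, so the remainder has abscissa strictly larger than $\lambda_{1K}$. Feeding this into (\ref{dk1}) and applying an Ikehara--Tauberian theorem for the bilateral Laplace transform (in the spirit of \cite{widder}) produces $\varphi_j(t)=A_j e^{\lambda_{1K}t}(1+o(1))$ as $t\to-\infty$ with $A_j>0$. Translating $\varphi_2$, I may assume $A_1=A_2$, so that the difference $v:=\varphi_1-\varphi_2$ loses its leading term and $\gamma_v>\lambda_{1K}$.

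Next I would show that $v$ decays at the faster rate, namely that $\gamma_v\ge\lambda_{2K}$. The difference obeys the variable-coefficient identity
\begin{equation*}
v(t)=\int_X d\rho(\tau)\int_\R\mathcal N(s,\tau)a(t-s,\tau)\,v(t-s)\,ds,\qquad a(\cdot,\tau)\to g'(0,\tau)\ \text{at}\ -\infty,
\end{equation*}
with $|a(\cdot,\tau)|\le\lambda(\tau)$. Since the $\lambda_{1K}$-component has been cancelled, the only exponential mode of the linearised problem below $\lambda_{2K}$ is excluded; using the kernel bounds of \textbf{(EC*)}, which are valid up to $\lambda_{2K}$, together with the meromorphic continuation of $\mathcal L v$, one verifies that $\mathcal L v$ has no singularity in $(\lambda_{1K},\lambda_{2K})$, whence $\gamma_v\ge\lambda_{2K}$.

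Finally comes the contraction, which is where the hypothesis $\tilde\chi(m)\ge0$ enters decisively. Because $\lambda(\tau)\ge g'(0,\tau)$ we have $\tilde\chi\le\chi$ pointwise, so $\tilde\chi(0)\le\chi(0)<0$; as $\tilde\chi$ is concave (being $1$ minus a convex Laplace transform of a nonnegative measure), the condition $\tilde\chi(m)\ge0$ forces $\tilde\chi>0$ on a nonempty subinterval of $(0,\lambda_{2K})$. I pick $\mu$ there with $\mu<\lambda_{2K}\le\gamma_v$, so $\tilde\chi(\mu)>0$. The Lipschitz bound gives $|v(t)|\le\int_X d\rho(\tau)\int_\R\mathcal N(s,\tau)\lambda(\tau)|v(t-s)|\,ds$, and setting $C^*:=\sup_{t\in\R}|v(t)|e^{-\mu t}$ (finite, since $\mu<\gamma_v$ controls the $-\infty$ tail and boundedness of $v$ with $\mu>0$ controls the $+\infty$ tail) yields $C^*\le(1-\tilde\chi(\mu))C^*$, i.e. $\tilde\chi(\mu)\,C^*\le0$. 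As $\tilde\chi(\mu)>0$ this forces $C^*=0$, hence $v\equiv0$ and uniqueness modulo translation follows. I expect the hardest step to be the third one---establishing $\gamma_v\ge\lambda_{2K}$ by ruling out intermediate singularities of $\mathcal L v$---since it is precisely here that the fine regularity (\textbf{(SB*)}, \textbf{$H_3$}) and the exponential kernel bounds (\textbf{(EC*)}) must be combined.
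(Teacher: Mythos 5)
You have correctly identified that this is a Diekmann--Kaper argument, and your outline (sublinearity forcing $\gamma_{\varphi_j}=\lambda_{1K}$, Ikehara-type asymptotics $\varphi_j\sim A_je^{\lambda_{1K}t}$, translation to match amplitudes, bootstrap of the decay of $v=\varphi_1-\varphi_2$ past $\lambda_{1K}$, then a weighted-sup estimate) is the skeleton of the actual proof --- note that the present paper does not prove Theorem \ref{agt2} at all; it is quoted verbatim from \cite{agt}, so the comparison is with the argument of that reference, whose structure you have essentially reconstructed. However, your final step contains a genuine gap. The contraction requires a $\mu$ with $\tilde\chi(\mu)>0$ \emph{strictly}, whereas the hypothesis grants only $\tilde\chi(m)\geq 0$, and your claim that concavity of $\tilde\chi$ together with $\tilde\chi(m)\geq0$ ``forces $\tilde\chi>0$ on a nonempty subinterval'' is false in the tangential case: a concave function with $\tilde\chi(0)<0$ may satisfy $\tilde\chi\leq 0$ everywhere with a single zero at its maximum point $m$. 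This degenerate case is not an ignorable corner --- it is precisely the critical-speed situation the surrounding paper is after: at $c=c_\star$ with $\chi_L(\gamma^\#(c_\star)-,c_\star)\neq0$, Lemma \ref{c00}(ii) gives $\chi_L(\cdot,c_\star)\geq0$ with a unique double zero, i.e.\ exactly $\tilde\chi\leq0$ with $\tilde\chi(m)=0$, and Theorem \ref{2}'s uniqueness of the minimal wave rests on Theorem \ref{agt2} covering it. When $\tilde\chi(m)=0$ your inequality degenerates to $C^*\leq C^*$ and yields nothing. The missing idea is a rigidity analysis at an attained supremum: since $\lambda\neq g'(0)$ gives $\tilde\chi<\chi$, hence $\lambda_{1K}<m<\lambda_{2K}$, the bootstrap supplies $|v(t)|\leq C_\nu e^{\nu t}$ for some $\nu\in(m,\lambda_{2K})$, so $h(t):=|v(t)|e^{-mt}$ is continuous and vanishes at $\pm\infty$ and its supremum $C^*$ is attained at some finite $t_0$; equality throughout $C^*=h(t_0)\leq\int_X\int_\R\mathcal N(s,\tau)\lambda(\tau)e^{-ms}h(t_0-s)\,ds\,d\rho(\tau)\leq C^*$ then forces $h(t_0-s)=C^*$ for a.e.\ $s$ in the support of the weighted kernel, and since $\mathcal N(\cdot,\tau_2)=k_1>0$ on all of $\R$, iteration propagates $h\equiv C^*$ towards $-\infty$, contradicting $h(-\infty)=0$ unless $C^*=0$.

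Two secondary soft spots deserve mention. First, finiteness of $C^*$ cannot be read off from ``$\mu<\gamma_v$'': the abscissa of convergence is an integral statement and does not by itself yield the pointwise bound $|v(t)|\leq Ce^{\mu t}$; likewise $v=o(e^{\lambda_{1K}t})$, which is all the Ikehara step gives after amplitude matching, does not imply $\gamma_v>\lambda_{1K}$ (consider $v\sim e^{\lambda_{1K}t}/|t|$). What the argument actually needs, and what the bootstrap delivers, is the pointwise estimate $|v(t)|\leq C_\nu e^{\nu t}$ for every $\nu<\lambda_{2K}$: iterate the \textbf{(SB*)}-type remainder bound $|g(u,\tau)-g(v,\tau)-g'(0,\tau)(u-v)|\leq C(\tau)\max(u,v)^\alpha|u-v|$, gaining $\alpha\lambda_{1K}$ per step, using that $\chi$ has no zero in $(\lambda_{1K},\lambda_{2K})$ and that the potential pole of $\mathcal Lv$ at $\lambda_{1K}$ is cancelled by the normalization $A_1=A_2$. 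Phrased this way your Step 3 is sound; phrased as ``$\gamma_v\geq\lambda_{2K}$'' it conflates integral and pointwise decay and then feeds the weaker information into Step 4, where the pointwise version is what is used.
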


\section*{Acknowledgements} The author thanks Professor Sergei Trofimchuk for valuable discussions and helpful comments. This work was supported by FONDECYT/INICIACION/ Project  11121457.

\end{document}